\theoremstyle{plain}
\theoremstyle{definition}
\newtheorem{definition}{Definition}[section]
\newtheorem{lemma}[definition]{Lemma}
\newtheorem{theorem}[definition]{Theorem}
\newtheorem{proposition}[definition]{Proposition}
\newtheorem{remark}[definition]{Remark}
\newtheorem{corollary}[definition]{Corollary}
\theoremstyle{break}
\journal{Studia Logica}
\begin{document}

\begin{frontmatter}

%% Title, authors and addresses
%% use the tnoteref command within \title for footnotes;
%% use the tnotetext command for the associated footnote;
%% use the fnref command within \author or \address for footnotes;
%% use the fntext command for the associated footnote;
%% use the corref command within \author for corresponding author footnotes;
%% use the cortext command for the associated footnote;
%% use the ead command for the email address,
%% and the form \ead[url] for the home page:
%%
%% \title{Title\tnoteref{label1}}
%% \tnotetext[label1]{}
%% \author{Name\corref{cor1}\fnref{label2}}
%% \ead{email address}
%% \ead[url]{home page}
%% \fntext[label2]{}
%% \cortext[cor1]{}
%% \address{Address\fnref{label3}}
%% \fntext[label3]{}

\title{\textbf{A categorical equivalence for monadic algebras of first-order substructural logics motivated by Kalman's construction}}

% use optional labels to link authors explicitly to addresses:
% \author[label1,label2]{}
% \address[label1]{}
% \address[label2]{}

\author{Juntao Wang$^{a}$, Mei Wang$^{b,*}$, William Zuluaga Botero$^{c}$}
\cortext[cor1]{Corresponding author. \\
Email addresses: wjt@xsyu.edu.cn (J.T. Wang), wangmeimath@163.com(M. Wang),\\ wizubo@gmail.com (W. Zuluaga Botero).}

\address [A] {School of Science, Xi'an Shiyou University, Xi'an 710065, Shaanxi, P.R. China}
%\address[B]  {School of Statistics, Xi’an University of Finance and Economics, 710100,Shaanxi, China}
%\address[B]{Department of Mathematics Education, Gyeongsang National University, Jinju 660-701, Korea}
%\address[B]{Department of Pure Mathematics, Yazd University, 89195-74, Iran}
\address[B]{School of Science, Xi Hang University, Xi'an 710077, Shaanxi, P.R. China}
%\address[C]{School of Statistics, Xi'an University of Finance and Economics, Xi'an, 710127, P.R. China}
\address[C]{ CONICET and Departamento de Matematica, Facultad de Ciencias Exactas, Universidad Nacional del Centro de la Provincia de Buenos Aires, Pinto, 399, 7000, Tandil, Argentina}

\begin{abstract}
% Text of abstract
The category $\mathbb{DRDL'}$, whose objects are c-differential residuated distributive lattices that satisfy the condition $\mathbf{CK}$, is the image of the category $\mathbb{RDL}$, whose objects are residuated distributive lattices, under the categorical equivalence (Kalman functor) $\mathbf{K}$. The main goal of this paper is to lift this equivalence $\mathbf{K}$ to the category $\mathbb{MRDL}$, whose objects are monadic residuated distributive lattices, and the category $\mathbb{MDRDL'}$, whose objects are pairs formed by an object of $\mathbb{DRDL'}$ and a center universal quantifier. Firstly, based on the variety of monadic FL$_\textrm{e}$-algebras, we introduce the concept of monadic residuated lattices and study some of their further algebraic properties, proving the classes of monadic residuated distributive lattices and monadic c-differential residuated distributive lattices are in one-to-one correspondence. Subsequently, based on this corresponding relation, we prove that there exists a categorical equivalence between the categories $\mathbb{MRDL}$ and $\mathbb{MDRDL'}$. The results of this paper not only generalizes the works of Sagastume and San Mart\'{i}n in [Mathematical Logic Quarterly, {\bf 60}(2014), 375--388], but also addresses and overcomes the limitations identified in the works of [Studia Logica, {\bf 111}(2023), 361--390]. Finally, this paper concludes with some applications regarding descriptions of a 2-contextual translation.
\end{abstract}

\begin{keyword} Substructural logic; monadic residuated lattice; Kalman functor; categorical equivalence

%MV-algebra; additive derivation; fixed point set; Boolean algebra

% keywords here, in the form: keyword \sep keyword stabilizer

% PACS codes here, in the form: \PACS code \sep code

% MSC codes here, in the form: \MSC code \sep code
\MSC[2010] 03B47, 06B05 %(2000 is the default)
%\MSC 06B35 \sep 06B99
\end{keyword}

\end{frontmatter}

% main text
\section{Introduction}
\label{intro}

  Substructural logics form a wide family of nonclassical logics that can be roughly defined as those logic systems such that, when presented by means of a Gentzen-style calculus, lack some of the structural rules (exchange, contraction and weakening), which occur in standard sequent systems of classical and intuitionistic logics \cite{Galatos}. As such they encompass a variety of systems independently developed since mid 20th century, including relevant logics or many-valued logics like monoidal logic (not satisfying contraction), linear logic (which, besides the former two, does not satisfy exchange either).  The  Gentzen system $\mathbf{FL}$ of full Lambek calculus, the weakest logic considered in substructural logics, is obtained from the Gentzen system $\mathbf{LJ}$ for intuitionistic propositional logic by removing three structural rules. As a very important extension of $\mathbf{FL}$, the sequent calculus $\mathbf{FL_{ew}}$ is also obtained from the intuitionistic logic by only deleting the contraction rule. Researchers always call substructural logics over $\mathbf{FL_{ew}}$ or logics without contraction rule \cite{Ono}, although the contraction rule holds in some of them. The class of logics without the contraction rule contains intermediate logics, {\L}ukasiewicz's many-valued logics \cite{Chang} and fuzzy logics in the sense of H\'{a}jek \cite{Hajeka}. Our main technical tool is algebraic in nature and is based on close connections between logics over $\mathbf{FL_{ew}}$ and the variety of residuated lattices \cite{Blount,Metcalfe,Cintulab,Galatosc,Galatosd,Galatose}.

Constructive logic with strong negation \cite{Rasiowa} was created for logical reasons: the
intuitionistic negation does not have good constructive properties. From the beginning, constructive logic with strong negation was closely related to intuitionistic logic. The algebraic counterpart of that logic was first studied by Rasiowa, and further defined as a variety by Brignole and Monteiro \cite{Brignolea,Brignoleb}. They call the algebraic
models Nelson algebras.  The algebraic approach of the relationship between intuitionism and constructive
logic with strong negation appears: a characterization of Nelson algebras as pairs of disjoint elements of Heyting algebras \cite{Vakarelov}. Cignoli \cite{Cignoli} also shows that this characterization can be formalized from a categorical point of view as an adjunction between the categories of Heyting algebras and Nelson algebras, which restricted to centered Nelson algebras becomes an equivalence that is induced by the Kalman functor. Furthermore, Castiglioni, Menni and Sagastume proved in \cite {Castiglionib} that the functor $\mathbf{K}$, motivated by a construction due to Kalman, can be extended to an equivalence between the category $\mathbb{RDL}$ of residuated distributive lattices and the category $\mathbb{DRDL'}$ of c-differential residuated distributive lattices that satisfy the condition $\mathbf{CK}$. More recently, Castiglioni, Lewin and Sagastume proved in \cite{Castiglionic,Sagastume1} that the category $\mathbf{MV^\bullet}$, whose objects are monadic MV-algebras, is the image of $\mathbf{MV}$, whose objects are MV-algebras, by the equivalence $\mathbf{K}$ and extended the construction to $\ell$-groups introducing the new category of monadic $\ell$-groups.

 Monadic (Boolean) algebra $(L,\lozenge)$, in the sense of Halmos \cite{Halmos}, is a Boolean algebra equipped with a closure operator $\lozenge$,  which abstracts algebraic properties of the standard existential quantifier ``for some". The name ``monadic" comes from the connection with predicate logics for languages having one placed predicates and a single quantifier. Subsequently, monadic MV-algebras, the algebraic counterpart of monadic {\L}ukasiewicz predicate logic $\mathbf{M{\L}PC}$, were introduced and studied in \cite{Belluce,Cimadamore,Nola,Rachuneka,Rutledge}; monadic Heyting algebras, the algebraic models of Prior's intuitionistic modal logic $\mathbf{MIPC}$, have been introduced and studied in \cite{Bezhanishvilia,Bezhanishvilib,Bezhanishvilic};  monadic BL-algebras, the algebraic semantics of monadic fragment of basic fuzzy predicate logic $\mathbf{mBL\forall}$, have been introduced and studied in \cite{Castano,Castano1,Castano 2}. Meanwhile, Tuyt introduced and studied the variety of monadic FL$_\textrm{e}$-algebras and their corresponding subvarieties, and showed they are indeed the algebraic semantics for one-variable (monadic) fragments of various first-order substructural logics \cite{Tuyt}. On this basis, Cintula, Metcalfe and Tokuda reached several more interesting and insightful conclusions, and established a general criterion for axiomatizing the one-variable fragments of various first-order logics using superamalgamation, offering an alternative proof-theoretic method for substructural logics with cut-free calculi \cite{Cintula}. It should be noted here that the author of the present paper was the first to integrate monadic algebras with the Kalman functor in an effort to give some representations of these algebras. More specifically, the paper \cite{Wang4} provided a first attempt to relate the category $\mathbb{WMRDL}$ of weak monadic residuated distributive lattices to that the category $\mathbb{WMDRDL'}$ of c-differential residuated distributive lattices with weak universal quantifiers via the Kalman functor $\mathbf{K}$, and to give some representations of weak monadic residuated distributive lattices. However, two aspects of this work could be strengthened. On one hand, the variety of weak monadic residuated lattices lacks a logical foundation from the perspective of algebraic logic. In order to address this, we replace it with the variety of monadic residuated lattices in the sense of Tuyt, which has been proven to form the algebraic semantics for the monadic fragments of first-order substructural logics. On the other hand, the categorical equivalence between the categories $\mathbb{WMRDL}$ and $\mathbb{WMDRDL'}$ is proved by relying on the isomorphisms $\alpha$ and $\beta$ (see Theorem 6.3 in \cite{Wang4}), where
  \begin{center} $\alpha(x)=(x,{\sim} x)$ and $\beta(x)=(\lambda(x),\lambda({\sim} x))$.
  \end{center}
  Nevertheless, it would be more appropriate to prove this key result by using the isomorphisms $\phi$ and $\psi$ (see Lemma 7.6 in \cite{Castiglionib}), where
  \begin{center} $\phi(x)=(x,0)$ and $\psi(x)=(x\cup c, {\sim} x\cup c)$,
  \end{center}
 since $\alpha$ and $\beta$ are applicable only to the category of involutive residuated lattices and their subcategories (see Lemma 7 in \cite{Castiglionic}), a limitation that restricts the generality of the argument. Hence, the first motivation of this paper is to prove the corresponding categorical equivalence in a more reasonable and precise way, based on monadic residuated lattices in the sense of Tuyt.
 
 Subsequently, conducted as a direct continuation of the research presented in \cite{Castiglionic}, Sagastume and San Mart\'{i}n \cite{Sagastume} have also lifted the Kalman functor $\mathbf{K}^\bullet$ from the categories $\mathbb{MV}$ and $\mathbb{MV}^\bullet$  to the categories $\mathbb{MV}_\textrm{U}$, whose objects are UMV-algebras (or MV-algebras with U-operators), and $\mathbb{MV}^\bullet_\textrm{cU}$, whose objects are pairs formed by an object of $\mathbf{K}^\bullet$ and a cU-operator, please see the following diagram:
 
 \begin{center}
\begin{tikzpicture}
%4个节点
  \node[inner sep=2pt] (MV^bullet) at (0,0) {$\mathbb{MV}$};
  \node[inner sep=2pt] (MV) at (0,3) {$\mathbb{MV}_\textrm{U}$ };
  \node[inner sep=2pt] (MDRL) at (3,0) {$\mathbb{MV}^\bullet$};
  \node[inner sep=2pt] (iIRL_0) at (3,3) {$\mathbb{MV}^\bullet_\textrm{cU}$};  
%水平纵向箭头
\draw[->] ([yshift=3pt]MV.east) -- ([yshift=3pt]iIRL_0.west) node[midway, above=3pt] {$\mathscr{K}$};%向右的箭头
\draw[<-] ([yshift=-3pt]MV.east) -- ([yshift=-3pt]iIRL_0.west) node[midway, below=3pt] {$\mathbf{K}^\bullet$};%向左的箭头
\draw[->] ([yshift=3pt]MDRL.west) -- ([yshift=3pt]MV^bullet.east) node[midway, above=3pt] {$\mathscr{K}$};%向右的箭头
\draw[<-] ([yshift=-3pt]MDRL.west) -- ([yshift=-3pt]MV^bullet.east) node[midway, below=3pt] {$\mathbf{K}^\bullet$};%向左的箭头

%2个纵向箭头  
\draw[->] (MV) -- (MV^bullet) node[midway, left=2pt] {$\mathbf{F}$};
  \draw[->] (iIRL_0) -- (MDRL) node[midway, right=2pt] {$\mathbf{F}$};
\end{tikzpicture} 
\begin{center}  {\bf{Figure 1.}} Diagram of the functors $\mathbf{K}^\bullet$ and $\mathscr{K}$ relate the categories $\mathbb{MV}_\textrm{U}$ and $\mathbb{MV}^\bullet_\textrm{cU}$.
\end{center}
\end{center}
Note that both MV-algebras and MV$^{\bullet}$-algebras satisfy the law of involution, their corresponding UMV-algebras and cUMV$^{\bullet}$-algebras have parallel definitions. In this case, even though the kalman functor $\mathbf{K}^\bullet$ induce an equivalence of the categories $\mathbb{MV}_\textrm{U}$ and $\mathbb{MV}^\bullet_\textrm{cU}$, it cannot simplify the given category $\mathbb{MV}_\textrm{U}$. Given the close relationship between UMV-algebras and monadic residuated lattices, to be more specific, UMV-algebras are a natural generalization of monadic residuated lattices (see Corollary \ref{corollary3.8}(2)), the second motivation is to generalize this result from the category of UMV-algebras to the case of  monadic residuated lattices. More precisely, much of this work is motivated by results due to Castiglioni, Menni and Sagastume \cite{Castiglionib} relating residuated distributive lattices and c-differential residuated distributive lattices with the condition $\mathbf{CK}$ by the functor $\mathbf{K}$, and the conviction that these can be lifted to the level of monadic residuated distributive lattices.
Let $\mathbb{MRDL}$ be the category of monadic residuated distributive lattices.
Then the main goal of this part is to isolate a reasonable category which completes the diagram:
\begin{center}
\begin{tikzpicture}
%4个节点
  \node[inner sep=2pt] (MV^bullet) at (0,0) {$\mathbb{RDL}$};
  \node[inner sep=2pt] (MV) at (0,3) {$\mathbb{MRDL}$ };
  \node[inner sep=2pt] (MDRL) at (3,0) {$\mathbb{DRDL'}$};
  \node[inner sep=2pt] (iIRL_0) at (3,3) {$\mathbb{?}$};  
%水平纵向箭头
\draw[->] ([yshift=3pt]MV.east) -- ([yshift=3pt]iIRL_0.west) node[midway, above=3pt] {$\mathbf{K}$};%向右的箭头
\draw[<-] ([yshift=-3pt]MV.east) -- ([yshift=-3pt]iIRL_0.west) node[midway, below=3pt] {$\mathbf{C}$};%向左的箭头
\draw[->] ([yshift=3pt]MDRL.west) -- ([yshift=3pt]MV^bullet.east) node[midway, above=3pt] {$\mathbf{K}$};%向右的箭头
\draw[<-] ([yshift=-3pt]MDRL.west) -- ([yshift=-3pt]MV^bullet.east) node[midway, below=3pt] {$\mathbf{C}$};%向左的箭头

%2个纵向箭头  
\draw[->] (MV) -- (MV^bullet) node[midway, left=2pt] {$\mathbf{F}$};
  \draw[->] (iIRL_0) -- (MDRL) node[midway, right=2pt] {$\mathbf{F}$};
\end{tikzpicture} 
\begin{center}  {\bf{Figure 2.}} Diagram of the functors $\mathbf{K}$ and $\mathbf{C}$ relate the categories $\mathbb{MRDL}$ and $\mathbb{?}$ .
\end{center}
\end{center}
where $\mathbb{MRDL}\rightarrow \mathbb{RDL}$ is the obvious forgetful functor. The category completing this square should be a c-differential residuated distributive lattice with the condition $\mathbf{CK}$ equipped with a monadic structure in such a way that the two structures interact in a reasonable way. Note that c-differential residuated distributive lattices satisfy the law of involution. Hence, in the definition of the corresponding monadic algebras, it is possible to use only one of the existential or universal quantifiers as primitive, the other being definable as the dual of the one taken as part of the signature. However, definitions of monadic residuated distributive lattices require the introduction of both kinds of quantifiers simultaneously, because these quantifiers are not mutually interdefinable. It is clear that the structure of monadic c-differential residuated distributive lattice $(L,\lozenge)$ is simpler than that of monadic residuated distributive lattice $(L,\square,\lozenge)$, because it has only one of the  existential or universal quantifier. In view of this point, the resulting category of our constructions will have a simpler version than the original one. 

Finally, we observe that the categories $\mathbb{MV}_\textrm{U}$ and $\mathbb{MV}^\bullet_\textrm{cU}$ are subcategories of $\mathbb{MRDL}$ and ?, respectively (see Corollary \ref{corollary5.17} and Theorem 13 in \cite{Castiglionic}). Crucially, these structures are not isolated but are fundamentally linked through a sophisticated categorical mechanism. Specifically, their connections are established via canonical equivalence relations induced by the Kalman functor, which acts as a unifying bridge, preserving essential logical and algebraic properties while mapping between these domains. This functorially-induced framework not only clarifies their hierarchical relationships but also reveals a deeper, intrinsic correspondence between their internal architectures.

This leads directly to the third motivation is to systematically explore the intrinsic connections between the structural diagrams in Figure 1 and Figure 2. Moving beyond mere taxonomy, this investigation seeks to uncover the coherent mathematical narrative underlying both representations, namely how they reflect parallel categorical truths mediated by the Kalman functor. Such an endeavor constitutes a highly non trivial and compelling research topic. Indeed, it is precisely this pivotal question, situated at the intersection of algebraic, logical, and categorical perspectives, that we address in the final section of the paper, where we synthesize these insights into a unified theoretical account. Accordingly, it is our goal to finalize the construction presented in Figure 3.

\begin{center}
\begin{tikzpicture}
  \node[inner sep=2pt] (MV^bullet) at (0,0) {$\mathbb{MV}^\bullet_\textrm{cU}$};
  \node[inner sep=2pt] (MV) at (0,3) {$\mathbb{MV}_\textrm{U}$};
  \node[inner sep=2pt] (MDRL) at (3,0) {$?$};
  \node[inner sep=2pt] (iIRL_0) at (3,3) {$\mathbb{MRDL}$};
  \node[inner sep=2pt] (DRLyipie) at (6,0) {$\mathbb{DRDL'}$};
  \node[inner sep=2pt] (IRL_0) at (6,3) {$\mathbb{RDL}$};
  
  \draw[->] ([xshift=3pt]MV.south) -- ([xshift=3pt]MV^bullet.north) node[midway, right=3pt] {$\mathscr{K}$};
  \draw[<-] ([xshift=-3pt]MV.south) -- ([xshift=-3pt]MV^bullet.north) node[midway, left=3pt] {$\mathbf{K}^\bullet$};
  \draw[->] ([xshift=3pt]MDRL.north) -- ([xshift=3pt]iIRL_0.south) node[midway, right=3pt] {$\mathbf{C}$};
  \draw[<-] ([xshift=-3pt]MDRL.north) -- ([xshift=-3pt]iIRL_0.south) node[midway, left=3pt] {$\mathbf{K}$};
  \draw[->] ([xshift=3pt]DRLyipie.north) -- ([xshift=3pt]IRL_0.south) node[midway, right=3pt] {$\mathbf{C}$};
  \draw[<-] ([xshift=-3pt]DRLyipie.north) -- ([xshift=-3pt]IRL_0.south) node[midway, left=3pt] {$\mathbf{K}$};

  \draw[->] (MV) -- (iIRL_0) node[midway, above=2pt] {inc};
  \draw[->] (iIRL_0) -- (IRL_0) node[midway, above=2pt] {$\mathbf{F}$};
  \draw[->] (MV^bullet) -- (MDRL) node[midway, above=2pt] {inc};
  \draw[->] (MDRL) -- (DRLyipie) node[midway, above=2pt] {$\mathbf{F}$};
\end{tikzpicture}
\begin{center}  {\bf{Figure 3.}} Diagram of the relationship between the categories $\mathbb{MV}_\textrm{U}$, $\mathbb{MV}^\bullet_\textrm{cU}$, ? and $\mathbb{MRDL}$.
\end{center}
\end{center}

In order to complete Figures 2 and 3, we introduce the category $\mathbb{MDRDL'}$ of monadic c-differential residuated distributive lattices that satisfy $\mathbf{CK}$ and extend the functor $\mathbf{K}$ to give an equivalence  between the categories $\mathbb{MRDL}$ and $\mathbb{MDRDL'}$, and then complete these two figures. 

The paper is organized as follows: In Section 2, we review some basic results about the categories $\mathbb{RDL}$ and $\mathbb{DRDL'}$. In Section 3, based on the variety of monadic FL$_\textrm{e}$-algebra, we introduce the variety of monadic residuated lattices by the natural semantic expansions. In Section 4, we lift this equivalence $\mathbf{K}$ from  the category $\mathbb{RDL}$ and the category $\mathbb{DRDL'}$ to the category $\mathbb{MRDL}$ and the category $\mathbb{MDRDL'}$. In Section 5, we provide a
a finite, nontrivial $2$-contextual translation from the equational consequence
relation of $\mathbb{MDRDL'}$ into that of $\mathbb{MRDL}$.

\section{Preliminaries}

For the reader's convenience, we include a table summarizing the notation used for classes of algebras and algebraic equations throughout the paper, followed by the corresponding definitions.

\begin{table}[H]
\begin{center}
\begin{tabular}{ccc}
\midrule % 移除顶部横线
\textbf{Class of algebras} &\textbf{Notation} \\
\midrule
Class of $\mathrm{FL}_\textrm{e}$-algebras & $\mathbb{FL}_\textrm{e}$ \\
Class of residuated lattices & $\mathbb{RL}$ \\
Class of residuated distributive lattices & $\mathbb {RDL}$ \\
Class of c-differential residuated distributive lattices & $\mathbb{DRDL}$\\
Class of  c-differential residuated distributive lattices satisfying $\mathbf{CK}$  & $\mathbb{DRDL'}$\\
Class of MV-algebras & $\mathbb{MV}$\\
Class of MV$^\bullet$-algebras & $\mathbb{MV}^\bullet$ \\
Class of monadic $\mathrm{FL}_\textrm{e}$-algebras & $\mathbb{MFL}_\textrm{e}$\\
Class of monadic residuated lattices & $\mathbb{MRL}$ \\
Class  of monadic involutive residuated lattices&  $\mathbb{MIRL}$ \\
Class of monadic residuated distributive lattices & $\mathbb{MRDL}$ \\
Class of monadic c-differential residuated distributive lattices & $\mathbb{MDRDL}$\\
Class of monadic c-differential residuated distributive lattices satisfying $\mathbf{CK}$  & $\mathbb{MDRDL'}$\\
Class of UMV-algebras & $\mathbb{MV}_\textrm{U}$\\
Class of UMV$^\bullet$-algebras  & $\mathbb{MV}^\bullet_\textrm{cU}$\\
\bottomrule
\end{tabular}
\end{center}
\centering
{\bf{Table 1.}}  Notations for the class of algebras covered throughout the paper.
\end{table}

\begin{table}[H]
\begin{center}
\begin{tabular}{ccc}
\midrule % 移除顶部横线
\textbf{Algebraic equation} & &\textbf{Name (Notation)} \\
\midrule
$x\leq 1$ & & Integrality $\mathrm{(INT)}$ \\
$(x\rightarrow y)\vee (y\rightarrow x)=1$ & & Prelinearity $\mathrm{(PRE)}$ \\
$x\wedge y=x\odot(x\rightarrow y)$ &&  Divisibility $\mathrm{(DIV)} $ \\
$\neg\neg x=x $ &&  Involution $\mathrm{(INV)}$ \\
\bottomrule
\end{tabular}
\end{center}
\centering
{\bf{Table 2.}} Notations for the algebraic equations covered throughout the paper.
\end{table}

In this section, we summarize some basic results concerning the categories $\mathbb{RDL}$, whose objects are residuated distributive lattices, and $\mathbb{DRDL'}$ , whose objects are c-differential residuated distributive lattices. The category $\mathbb{DRDL'}$ arises as the image of $\mathbb{RDL}$ under the Kalman functor $\mathbf{K}$, which establishes a categorical equivalence between them. Moreover, $\mathbb{MV}^\bullet$ can be regarded as the full subcategory of $\mathbb{DRDL'}$ whose objects are c-differential residuated lattices satisfying the condition $\mathbf{CK}$.

\begin{definition}\label{definietion2.1}\cite{Galatos} An algebraic structure $(L,\wedge,\vee,\odot,\rightarrow,0,1)$ of type $(2,2,2,2,0,0)$ is called a \emph{FL$_\textrm{e}$-algebra} if it satisfies the following conditions:

  (1)\ $(L,\wedge,\vee)$ is a lattice,

  (2)\ $(L,\odot,1)$ is a  commutative monoid,

  (3)\ $x\odot y\leq z$ iff $x\leq y\rightarrow z$ for any $x,y,z\in L$.
  \end{definition}

   Clearly, the class $\mathbb{FL}$ forms a variety. In what follows, we denote by $L$ the universe of a residuated lattice $(L,\wedge,\vee,\odot,\rightarrow,0,1)$. In any residuated lattice $L$, we define other operations
\begin{center}$x^0=1$ and $x^n=x^{n-1}\odot x$  for $n\geq 1$, 

${-}x=x\rightarrow 0$ and ${--} x=-(- x)$ for any $x,y\in L$.
\end{center}

  \begin{definition}\label{definietion2.2} \cite{Galatos,Jipsen} A FL$_\textrm{e}$-algebra $L$ is

  (1) \ a \emph{residuated lattice} (FL$_\textrm{ew}$-algebra) iff it satisfies the equation (INT).
  
  (2)\ \emph{distributive} iff its underlying lattice is distributive.

  (3)\ \emph{involutive} iff it satisfies the equation (INV).
  
  (4)\ an \emph{MV-algebra} iff it satisfies the equations (INV), (DIV) and (PRE).
  \end{definition}

Subsequently, we recall the results concerning the equivalence of the categories $\mathbb{RDL}$ and $\mathbb{DRDL'}$. To distinguish the objects in $\mathbb{RDL}$ and $\mathbb{DRDL'}$, we use $\mathcal{L}$ to denote the universe of the involutive residuated lattice in $\mathbb{DRDL'}$. For further exposition on this subject, we refer the reader to \cite{Galatosb,Castiglionib,Castiglionic}.

\begin{definition}\label{definition2.3.}\cite{Galatosb} An algebraic structure $(\mathcal{L},\cap,\cup,\otimes,{\sim},0,1)$ is an \emph{involutive residuated lattice} iff it satisfies the following conditions:

(1)\ $(\mathcal{L},\cap,\cup,0,1)$ is a bounded lattice,

(2)\ $(\mathcal{L},\otimes,1)$ is a commutative monoid,

(3)\ ${\sim}$ is an involution of the lattice that is a dual automorphism,

(4)\ $x\otimes y\leq z$ iff $x\leq {\sim}(y\otimes {\sim} z)$ for any $x,y,z\in \mathcal{L}$.
\end{definition}

\begin{remark}\label{remark2.4.} Note that involutive residuated lattices and dualizing residuated lattices are equivalent (see  \cite{Castiglionib}, Remark 4.1). Recalling in \cite{Castiglionib} that an algebra $(\mathcal{L},\cap,\cup,\otimes,\leadsto,0,1)$ is said to be a \emph{dualizing residuated lattice} provided it satisfies the following conditions:

(1) $(\mathcal{L},\cap,\cup,\otimes,\leadsto,0,1)$ is a residuated lattice,

(2) $0$ is a cyclic dualizing element. That is
$(x\leadsto 0)\leadsto 0=x$, for all $x\in \mathcal{L}$.
\end{remark}

An involutive residuated lattice $\mathcal{L}$ is called \emph{centered} if it contains a distinguished element, called a \emph{center}, which is a fixed point of the involution. A \emph{c-differential residuated lattice} is an involutive residuated lattice $\mathcal{L}$ with center $c$ satisfying the ``Leibniz''
 condition (see \cite{Castiglionib}, Definition 7.2):
\begin{center} 
$(x \otimes y) \cap c = ((x \cap c) \otimes y) \cup (x \otimes (y \cap c))$,
\end{center}
for all $x,y \in \mathcal{L}$. A c-differential residuated lattice is said to be \emph{distributive} if its underlying lattice is a bounded distributive lattice. \medskip

Castiglioni et al. constructed in \cite{Castiglionib} a c-differential residuated distributive lattice from a residuated distributive lattice. In what follows, we provide a summary of this construction.\medskip

Let $L\in \mathbb{RDL}$. Define $\mathbf{K}(L)$ in the following way:
\begin{center} $\mathbf{K}(L)=\{(x,y)\in L\times L^o \mid x\odot y=0\}$,
\end{center}
where $L^o$ is the dual order of $L$ (see \cite{Castiglionib}, Section 7). Define the operations in $\mathbf{K}(L)$:
\begin{center} $(x,y)\cup (z,w)=(x\vee z,y\wedge w)$,

$(x,y)\cap (z,w)=(x\wedge z,y\vee w)$,

$(x,y)\otimes(z,w)=(x\odot z,(x\rightarrow w)\wedge (z\rightarrow y))$,

$(x,y)\leadsto(z,w)=((x\rightarrow z)\wedge (w \rightarrow y), w\odot x)$.

\end{center}
Then the algebra $(\mathbf{K}(L),\cap,\cup,\otimes,\leadsto,(0,1),(1,0))$ is a dualizing residuated distributive lattice, and $(1,1)$ is a cyclic dualizing element, the induced order $\leq$ by $\cap$ in $\mathbf{K}(L)$ is given by
 \begin{center} $(x_1,y_1)\leq (x_2,y_2)$ if and only if $x_1\leq x_2$ and $y_1\geq y_2$
 \end{center}
 (see \cite{Tsinakis}, Corollary 3.5). The algebraic structure $(\mathbf{K}(L),\cap,\cup,\otimes,{\sim},(0,1),(1,0)),$ with 
\begin{center} ${\sim}(x,y)=(x,y)\leadsto (1,1)=(y,x)$
 \end{center}
is an involutive residuated lattice with $c=(0,0)$ (see  \cite{Castiglionib}, Remark 4.1), and an object of $\mathbb{DRDL}$. The assignment $\mathbf{K}$ extends to a functor $\mathbf{K}:\mathbb{RDL}\rightarrow \mathbb{DRDL}$ (see \cite{Castiglionib}, Lemma 7.1).

\medskip

Let $\mathcal{L}\in \mathbb{DRDL}$ and $\mathcal{C}(\mathcal{L})=\{x\in \mathcal{L}\mid x\geq c\}$. In  $\mathcal{C}(\mathcal{L})$ define the product
\begin{center} $x\otimes_cy=(x\otimes y)\cup c$,
\end{center}
the bottom as the constant $c$ and the other operations as those induced from $\mathcal{L}$. Then $\mathcal{C}(\mathcal{L})$ is an object of $\mathbb{DRDL}$ and $\mathcal{L}\mapsto \mathcal{C}(\mathcal{L})$ defines another functor
\begin{center} $\mathbf{C}:\mathbb{DRDL}\rightarrow\mathbb{RDL}$ ,
\end{center}
which is left adjoint to $\mathbf{K}$ (see \cite{Castiglionib}, Theorem 7.6).

The adjunction
\begin{center} $\mathbf{C}\dashv \mathbf{K}:\mathbb{RDL}\rightarrow \mathbb{DRDL}$
 \end{center}
restricts to an equivalence $\mathbf{C}\dashv \mathbf{K}:\mathbb{RDL}\rightarrow \mathbb{DRDL'}$ (see \cite{Castiglionib}, Corollary 7.8),
where $\mathbb{DRDL'}$, is the subcategory of the category $\mathbb{DRDL}$, whose objects $\mathcal{L}$ satisfy the following condition:
\begin{center} $(\mathbf{CK})$ \ ~~~For every pair of element $x,y\in \mathcal{L}$ such that $x,y\geq c$ and $x\otimes y\leq c$, there exists $z\in \mathcal{L}$ such that $z\cup c=x$ and ${\sim}z\cup c=y$.
\end{center}

Castiglioni et al. also proved in \cite{Castiglionic} that $\mathbb{DRDL'}$ forms a variety. More precisely, let $\mathcal{L}\in \mathbb{DRDL'}$. Then there is a map $\kappa:\mathcal{L}\rightarrow \mathcal{L}$ that satisfies the following two conditions:

\begin{center}

$\kappa x\cup c=c\leadsto x$~~and~~$\kappa x\cup c=x\cup c$.
\end{center}

Conversely, if $\mathcal{L} \in \mathbb{DRDL'}$ in which there exists an operator $\kappa$ that satisfies the previous equations, then $(\mathbf{CK})$ holds on $\mathcal{L}$ (see \cite{Castiglionic}, Theorem 1).\medskip

 Based on the above, we recall the results concerning the equivalence of the categories $\mathbb{MV}$ and $\mathbb{MV^\bullet}$.\medskip

Let $\mathbb{MV^\bullet}$ be the full subcategory of $\mathbb{DRDL'}$ whose object $\mathcal{L}$ satisfying 
\begin{center}
$\text{(Inv$^\bullet$)}~{\sim}\kappa x=\kappa{\sim}\kappa x$,
 
$\text{(Lin$^\bullet$)}~(x\leadsto y)\cup (y\leadsto x)\geq c$, 

$\text{(QHey$^\bullet$)}~(x\otimes c)\otimes (x\leadsto(y\cup c))=(x\cap y)\otimes c$,
\end{center}
 for any $x,y\in \mathcal{L}$  [see \cite{Castiglionic}, Theorem 13]. The category $\mathbb{MV^\bullet}$, whose objects are MV$^\bullet$-algebras, is the image of the category $\mathbb{MV}$, whose objects are MV-algebras, under the categorical equivalence $\mathbf{K}^\bullet$. More precisely, for $\mathcal{L}\in
\mathbb{MV^\bullet}$, we have
\begin{center} $\kappa(\mathcal{L})=\{x\in \mathcal{L}\mid \kappa(x)=x\}$.
\end{center}
The assignment $\mathcal{L}\mapsto \kappa(\mathcal{L})$ extends to a functor $\mathscr{K}:\mathbb{MV^\bullet}\rightarrow \mathbb{MV}$ [see \cite{Castiglionic}, Theorem 10]. If $g:\mathcal{L}_1\rightarrow \mathcal{L}_2$ is a morphism in $\mathbb{MV^\bullet}$, then $\mathscr{K}(g):\kappa(\mathcal{L}_1)\rightarrow \kappa(\mathcal{L}_2)$ is the morphism in $\mathbb{MV}$ given by the restriction of $g$ to $\kappa(\mathcal{L}_1)$. Conversely, for $L\in \mathbb{MV}$, the assignment $L\mapsto \mathbf{K}^\bullet(L)$ extends to a functor $\mathbf{K}^\bullet:\mathbb{MV}\rightarrow \mathbb{MV^\bullet}$. If $L_1\rightarrow L_2$ is a morphism in $\mathbb{MV}$, then
$\mathbf{K}^\bullet(f):L_1\rightarrow L_2$ is the morphism in $\mathbb{MV^\bullet}$ given by 
\begin{center}$(\mathbf{K}^\bullet(f))(x,y)=(f(x),f(y))$.
\end{center} The adjunction $\mathscr{K} \dashv \mathbf{K}^\bullet:\mathbb{MV^\bullet}\rightarrow \mathbb{MV}$ yields an equivalence [see \cite{Castiglionic}, Corollary~15]. 
This equivalence is realized explicitly via natural isomorphisms: for any $L\in \mathbb{MV}$, 
the isomorphism 
\[
\alpha:L\longrightarrow \kappa(\mathbf{K}^\bullet(L)), \quad \alpha(x)=(x,{\sim} x),
\] 
and for any $\mathcal{L}\in \mathbb{MV^\bullet}$, the isomorphism 
\[
\beta:\mathcal{L}\longrightarrow \mathbf{K}^\bullet(\kappa(\mathcal{L})), \quad 
\beta(x)=(\lambda(x),\lambda({\sim} x)), \text{ where } \lambda(x)={\sim} \kappa({\sim} x)
\] 
[see \cite{Castiglionic}, Theorem~11].

\section{Monadic residuated lattices}

 The variety of monadic FL$_\textrm{e}$-algebras was introduced by Tuyt in his doctoral dissertation, as an algebraic semantic for one-variable fragments of first-order substructural logics \cite{Tuyt}.

 \begin{definition}\label{definition3.1} \cite{Tuyt} A \emph{monadic FL$_\textrm{e}$-algebra} is a triple $(L,\square,\lozenge)$, where $L$ is a FL$_\textrm{e}$-algebra and $\square,\lozenge:L\rightarrow L$ are two unary operators on $L$ satisfying the conditions:

 (M1)\ $\square x\leq x$,

 (M2)\ $\square\lozenge x=\lozenge x$,

 (M3)\ $\square(x\wedge y)=\square x\wedge \square y$,

 (M4)\ $\square 0=0$,

 (M5)\ $\square 1=1$,

 (M6)\ $\square (x\rightarrow\square y)=\lozenge x\rightarrow\square y$,

 (M7)\ $\square (\square x\rightarrow y)=\square x\rightarrow\square y$,\\
 for any $x,y\in L$.
 \end{definition}

The class $\mathbb{MFL}_\textrm{e}$ of monadic FL$_\textrm{e}$-algebras forms a variety.

 Here we give an equivalent characterization of monadic residuated lattices.

 \begin{theorem}\label{theorem3.2}Let $L$ be a residuated lattice and $\square,\lozenge$ be two unary operators on the residuated lattice $L$. Then the following statements are equivalent:

 (1)\ $(L,\square,\lozenge)$ is a monadic residuated lattice,

 (2)\ $\square$ and $\lozenge$ satisfy the identities (M1)-(M3) and (M6)-(M7).
 \end{theorem}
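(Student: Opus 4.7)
The direction (1)$\Rightarrow$(2) is immediate, since (M1)--(M3) and (M6)--(M7) are a sublist of the defining axioms of a monadic residuated lattice. The content of the theorem is the converse: under the integrality hypothesis, the axioms (M4) $\square 0 = 0$ and (M5) $\square 1 = 1$ are already derivable from (M1) and (M7) alone, so no reference to $\lozenge$ is needed.

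My plan is to extract (M4) and (M5) from (M7) through two judicious substitutions, using (M1) together with the fact that integrality makes $1$ the top and $0$ the bottom of the underlying lattice. First, to obtain (M5), I would set $x = y = 1$ in (M7) to get $\square(\square 1 \to 1) = \square 1 \to \square 1$. Since $a \to 1 = 1$ by integrality and $\square 1 \to \square 1 = 1$ by reflexivity of residuation, both sides collapse and we recover $\square 1 = 1$.

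To obtain (M4), I would set $x = y = 0$ in (M7), yielding $\square(\square 0 \to 0) = \square 0 \to \square 0 = 1$. Applying (M1) to the element $\square 0 \to 0$ gives $1 = \square(\square 0 \to 0) \leq \square 0 \to 0$, and integrality forces $\square 0 \to 0 = 1$, whence $\square 0 \leq 0$ by residuation. Since $0$ is the bottom of the residuated lattice, the reverse inequality is automatic and $\square 0 = 0$ follows.

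The only delicate point, though not really an obstacle, is recognizing that integrality is carrying the whole argument: it is the boundedness of the lattice ($0$ bottom, $1$ top) that makes (M4) and (M5) redundant in the presence of (M1) and (M7). In the broader setting of $\mathrm{FL}_\textrm{e}$-algebras, where weakening (and hence integrality) fails, these two axioms would remain genuinely independent, so the theorem genuinely reflects the algebraic strength contributed by the integrality equation (INT).
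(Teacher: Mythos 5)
Your proposal is correct and follows essentially the same route as the paper: the forward direction is trivial, and the converse amounts to deriving (M4) and (M5) from (M1), (M7) and integrality, with your (M5) argument ($\square 1=\square(\square 1\rightarrow 1)=\square 1\rightarrow\square 1=1$) matching the paper's verbatim. The only difference is that your (M4) derivation detours through (M7); the paper simply instantiates (M1) at $x=0$ to get $\square 0\leq 0$ directly, which is shorter but uses the same underlying facts.
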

 \begin{proof} $(1)\Rightarrow (2)$ It can be directly follows from Definition \ref{definition3.1}.

 $(2)\Rightarrow(1)$ Here we only need to show that (M4) and (M5) hold.

 (M4) Taking $x=0$ in (M1), we have
 \begin{center} $\square 0\leq 0$,
 \end{center} which implies $\square 0=0$.

 (M5) Taking $x=y=1$ in (M1) we get $\square1\to 1=1$, thus by (M7), we have
 \begin{center} $\square1=\square(\square 1\rightarrow 1)=\square 1\rightarrow \square 1=1$,
 \end{center} which implies $\square 1=1$.
 \end{proof}

Now we introduce the variety of monadic residuated lattices in the sense of Tuyt, 
and study some of their some subvarieties, for instance, monadic involutive
residuated lattices and UMV-algebras.

\begin{definition}\label{definition3.3} A monadic residuated lattice is a triple $(L,\square,\lozenge)$ that satisfies the identities: (M1)-(M3) and (M6)-(M7).
\end{definition}

The unary operators $\square,\lozenge:L\rightarrow L$ are called a universal and an existential quantifier on a residuated lattice $L$, respectively. The class $\mathbb{MRL}$ of monadic residuated lattices clearly forms a variety.

 \begin{proposition}\label{proposition3.4} In any monadic residuated lattice $(L,\square,\lozenge)$, the following identities hold:

  (1)\ $x\leq \lozenge x$,

  (2)\ $x\leq y$ implies $\square x\leq \square y$,

  (3)\ $x\leq y$ implies $\lozenge x\leq \lozenge y$,

  (4)\ $\square x=\lozenge\square x$,

  (5)\ $\square (\lozenge x\rightarrow y)=\lozenge x\rightarrow\square y$,

  (6)\ $\square (x\rightarrow \lozenge y)=\lozenge x\rightarrow\lozenge y$,

  (7)\ $\square\square x=\square x$,

  (8)\ $\lozenge\lozenge x=\lozenge x$,

  (9)\ $\lozenge 1=1$,

  (10)\ $\lozenge 0=0$,

  (11)\ $\lozenge(\lozenge x\odot \lozenge y)=\lozenge x\odot \lozenge y$,

  (12)\ $\square(\square x\odot \square y)=\square x\odot \square y$,

  (13)\ $\lozenge(x\odot \lozenge y)=\lozenge x\odot \lozenge y$,

  (14)\ $\square(\lozenge x\vee \lozenge y)=\lozenge x\vee \lozenge y$,

  (15)\ $\lozenge(x\vee y)=\lozenge x\vee \lozenge y$,

  (16)\ $\lozenge(x\rightarrow \lozenge y)\leq \lozenge x\rightarrow\lozenge y$,
  
  (17)\ $ \square x\odot \square y\leq \square(x\odot y)$,
  
  (18)\ $\lozenge(x\odot \square y)=\lozenge x\odot \square y$,
  
  (19)\ $\lozenge(x\odot y)\geq \lozenge x\odot \square y$,
  
  (20)\ $\square(\square x\rightarrow \square y)=\square x\rightarrow \square y$,

  (21)\ $\square{-}\square x={-}\square x$,

  (22)\ $\square{-}x={-}\lozenge x$,
  
  (23) $\lozenge \square x \leq x$,

  (24) $x\leq \square \lozenge x$,

  (25) $ \lozenge x \leq y$ if and only if $x\leq \square y$.
 \end{proposition}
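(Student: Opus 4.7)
The plan is to organize the twenty-five identities along a dependency graph and discharge them in layers: first monotonicity and idempotency of the two operators, then the lemma that $\square$-fixed elements coincide with $\lozenge$-fixed elements, next the multiplicative and lattice interactions, and finally the adjunction. Monotonicity of $\square$ (item (2)) is immediate from (M3): $x \leq y$ gives $\square x = \square(x \wedge y) = \square x \wedge \square y$. To establish (1), I would instantiate (M6) at $y = \lozenge x$ and invoke (M2) to obtain $\square(x \rightarrow \lozenge x) = \lozenge x \rightarrow \lozenge x = 1$; integrality and (M1) then force $x \rightarrow \lozenge x = 1$, i.e.\ $x \leq \lozenge x$. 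Items (5), (6), (7) and (20) fall out of (M6) and (M7) by single substitutions: (7) from (M7) at $y = \square x$, (6) from (M6) with $y$ replaced by $\lozenge y$ together with (M2), (5) dually from (M7) with $x$ replaced by $\lozenge x$ using (M2), and (20) from (M7) with $y$ replaced by $\square y$ using (7). Monotonicity of $\lozenge$ (item (3)) follows from (6): $x \leq y$ yields $x \rightarrow \lozenge y = 1$, whence $\lozenge x \rightarrow \lozenge y = \square 1 = 1$ by (M5). Idempotency (8) comes from (6) at $x = \lozenge y$, and (4) from comparing (M6) at $x \mapsto \square x$ with (M7) at $y \mapsto \square y$, which equates $\lozenge \square x \rightarrow \square z$ with $\square x \rightarrow \square z$ for every $z$; choosing $z = x$ gives $\lozenge \square x \leq \square x$.

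The structural lemma driving the rest of the argument is that a $\square$-fixed element is $\lozenge$-fixed: if $u = \square u$, then by (1), $u \leq \lozenge u$, and by (4), $\lozenge u = \lozenge \square u = \square u = u$. The main technical hurdle lies in the multiplicative identities (11)--(13). I would first establish (12), $\square(\square x \odot \square y) = \square x \odot \square y$, by proving the nontrivial direction via residuation: from $\square y \odot \square x \leq \square x \odot \square y$ I get $\square y \leq \square x \rightarrow (\square x \odot \square y)$, apply $\square$, use (M7) and (7) to rewrite the right-hand side as $\square x \rightarrow \square(\square x \odot \square y)$, and then residuate back. From (12) and the fixed-point transfer, (11) is immediate. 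For (13), the inequality $\lozenge(x \odot \lozenge y) \leq \lozenge x \odot \lozenge y$ comes from $x \leq \lozenge x$ and (11); the reverse requires the observation that $\lozenge y \rightarrow \lozenge(x \odot \lozenge y)$ is $\square$-fixed by (6) and (8), hence $\lozenge$-fixed, so that applying $\lozenge$ to $x \leq \lozenge y \rightarrow \lozenge(x \odot \lozenge y)$ preserves the inequality and, after residuation, yields $\lozenge x \odot \lozenge y \leq \lozenge(x \odot \lozenge y)$. Items (17)--(19) are then easy consequences of (13) and monotonicity.

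The remaining identities are routine given this scaffolding. For (9), integrality and (1) give $1 \leq \lozenge 1 \leq 1$; for (10), instantiating (M6) at $x = 0$ yields $\lozenge 0 \leq \square y$ for every $y$, and the choice $y = 0$ together with (M4) closes the argument. Items (14)--(15) combine (M3), monotonicity of both operators, and fixed-point transfer. Item (16) follows from (6) and (M1). Items (21)--(22) are (5) and (6) instantiated at $y = 0$. Finally, the adjunction (25) is obtained by combining (1) with (2) and (M2) in the forward direction, and (3), (4), (M1) in the reverse; (23) is an immediate consequence of (4) and (M1), and (24) of (1) and (M2). The main conceptual obstacle throughout is to keep the derivation non-circular, particularly in the proof of (13), where one must not appeal to any $\lozenge$--$\odot$ identity that has not yet been established, and the fixed-point transfer lemma must be invoked at exactly the right moment to turn a $\square$-stable expression into a $\lozenge$-stable one.
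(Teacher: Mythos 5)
Your proposal is essentially correct and is in fact considerably more self-contained than the paper's own proof, which discharges items (1)--(17) by deferring to Lemma~2.2 of Tuyt's thesis and only argues (18)--(25) explicitly. Where the two overlap, your routes coincide with the paper's: (18) via (4) and (13), (19) via (M1) and (18), (20) via (M7) and (7), (21)--(22) by instantiating the box-implication axioms at $y=0$, (23) from (4) and (M1), and (25) as the Galois connection assembled from (23)/(24) together with monotonicity. Your layered derivation of (1)--(17) from the axioms alone --- in particular the residuation trick for (12), the fixed-point transfer lemma ($\square$-fixed implies $\lozenge$-fixed via (4)), and its use to get the hard direction of (13) --- is sound and correctly ordered; I checked each step and found no circularity. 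Two attributions are slightly off but harmlessly so: (17) follows from (12) (not (13)) together with (M1) and monotonicity, and (21) is (M7) (equivalently (20)) instantiated at $y=0$ rather than (5).

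The one genuine problem is item (16). Your claim that it ``follows from (6) and (M1)'' produces the \emph{reverse} inequality: (6) gives $\lozenge x\rightarrow\lozenge y=\square(x\rightarrow\lozenge y)$, and (M1) together with (1) then yields $\lozenge x\rightarrow\lozenge y\leq x\rightarrow\lozenge y\leq\lozenge(x\rightarrow\lozenge y)$, which is $\geq$ where the statement asserts $\leq$. The inequality as printed cannot be repaired by another argument: in the functional monadic algebra $L=\mathbf{2}^{\{1,2\}}$ with $\square$ and $\lozenge$ the pointwise infimum and supremum, taking $x=(0,1)$ and $y=(0,0)$ gives $\lozenge(x\rightarrow\lozenge y)=(1,1)\not\leq(0,0)=\lozenge x\rightarrow\lozenge y$. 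So (16) as stated in the paper appears to be a typo for the opposite inequality, and your two ingredients prove exactly that corrected version; but as written, your sketch does not (and could not) establish the printed claim, and you should flag the discrepancy rather than assert the derivation goes through.
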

 \begin{proof} Noticing here that every monadic residuated lattice is a monadic FL$_\textrm{e}$-algebra, and so the proof of (1)-(16) are similar to that of Lemma 2.2(L7)-(L23) in \cite{Tuyt}, respectively.

 (18)\ By (4) and (13), we have
 \begin{center} $\lozenge(x\odot \square y)=\lozenge(x\odot \lozenge \square y)=\lozenge x\odot \lozenge \square y=\lozenge x\odot \square y$,
 \end{center} which implies $\lozenge(x\odot \square y)= \lozenge x\odot \square y$.
 
 (19)\ By Definition \ref{definition3.1} (M1) and the monotonicity of $\odot$, $x\odot y\geq x\odot\square y$, then by (3) and (18) we have
  
 \begin{center} $\lozenge(x\odot y)\geq \lozenge(x\odot \square y)=\lozenge x\odot \square y$,
 \end{center} which implies $\lozenge(x\odot y)\geq \lozenge x\odot \square y$.

 (20)\  By Definition \ref{definition3.1}(M7) and (7), we have
 \begin{center} $\square (\square x\rightarrow \square y)=\square x\rightarrow \square\square y=\square x\rightarrow\square y$,
 \end{center} which implies $\square(\square x\rightarrow \square y)=\square x\rightarrow \square y$.

 (21)\ Taking $y=0$ in Definition \ref{definition3.1}(M7) and from Definition \ref{definition3.1}(M4), we have
 \begin{center} $\square{-}\square x=\square (\square x\rightarrow 0)=\square x\rightarrow\square 0={-}\square x$,
 \end{center}which implies $\square{-}\square x={-}\square x$.

 (22)\ Taking $y=0$ in (6) and from (10), we have
 \begin{center} $\square{-}x=\square (x\rightarrow \lozenge 0)=\lozenge x\rightarrow\lozenge 0={-}\lozenge x$,
 \end{center} which implies $\square{-}x={-}\lozenge x$.

 (23)
 This follows from (4) and Definition \ref{definition3.1} (M1).

(24)
 As an application of \ref{definition3.1} (M5), (6), (1) and residuation properties we have 
 \[
 1=\square1=\square(\lozenge x \to \lozenge x)=\lozenge x\to \square\lozenge x\leq x\to \square\lozenge x,
 \]
 which implies $x\leq \square \lozenge x$.

(25)
 This is basically a restatement of (23) and (24).
 \end{proof}

In order to facilitate the definition of monadic c-differential residuated lattice in Section 4, we introduce the notion of monadic involutive residuated lattices in advance.

\begin{definition}\label{definition3.5} An algebra $(L,\wedge,\vee,\odot,\rightarrow,\square,0,1)$ is said to be a \emph{ monadic involutive residuated lattice} if $(L,\wedge,\vee,\odot,\rightarrow,0,1)$ is an involutive residuated lattice and in addition $\square$ satisfies the identities: (M1),(M3) and (M7).
\end{definition}

The class $\mathbb{MIRL}$ of monadic involutive residuated lattice forms a variety.

\begin{theorem}\label{theorem3.6} The subvariety of the variety $\mathbb{MRL}$ determined by the
 \begin{center} ${--}x=x$,
 \end{center}
 is term-equivalent to the variety $\mathbb{MIRL}$.
\end{theorem}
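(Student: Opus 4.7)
The plan is to establish the term-equivalence by exhibiting explicit mutual translations between the signatures and verifying that the axioms match. Since the signature of $\mathbb{MIRL}$ contains $\square$ but omits $\lozenge$, the key step is to recognise $\lozenge$ as a term definable from $\square$ via the involution: Proposition~\ref{proposition3.4}(22) gives $\square{-}x={-}\lozenge x$, so under $(\mathrm{INV})$ we have $\lozenge x={-}\square{-}x$. We adopt this equation as the definition of $\lozenge$ when passing from an $\mathbb{MIRL}$-algebra to an expansion in the signature of $\mathbb{MRL}$; in the opposite direction the translation is the identity on $\square$ (i.e.\ forget $\lozenge$).

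The forward direction, from the subvariety of $\mathbb{MRL}$ satisfying $(\mathrm{INV})$ to $\mathbb{MIRL}$, is immediate, since M1, M3 and M7 are among the axioms of $\mathbb{MRL}$ and hold for $\square$. For the substantive direction, given $(L,\square)\in\mathbb{MIRL}$ and $\lozenge x:={-}\square{-}x$, we must show $(L,\square,\lozenge)\in\mathbb{MRL}$. By Theorem~\ref{theorem3.2} it suffices to verify M1, M2, M3, M6, M7; M1, M3, M7 are inherited directly, and M4 and M5 follow from M1 and M7 exactly as in the proof of Theorem~\ref{theorem3.2}. For M2, since ${-}\square{-}x=\square{-}x\rightarrow 0$, an application of M7 with the second argument equal to $0$ gives $\square(\square{-}x\rightarrow 0)=\square{-}x\rightarrow\square 0=\square{-}x\rightarrow 0={-}\square{-}x=\lozenge x$. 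The same computation, applied to $y$ in place of ${-}x$, shows that ${-}\square y$ is a $\square$-fixed point.

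The crux is M6. Using the involutive contrapositive law $a\rightarrow b={-}b\rightarrow{-}a$, valid in any involutive residuated lattice, we first rewrite $x\rightarrow\square y={-}\square y\rightarrow{-}x$; then, since ${-}\square y=\square({-}\square y)$ by the observation above, an application of M7 yields
\[
\square(x\rightarrow\square y)=\square\bigl(\square({-}\square y)\rightarrow{-}x\bigr)=\square({-}\square y)\rightarrow\square({-}x)={-}\square y\rightarrow\square({-}x),
\]
and applying the contrapositive law once more gives ${-}\square y\rightarrow\square({-}x)={-}\square({-}x)\rightarrow\square y=\lozenge x\rightarrow\square y$, which is M6. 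Finally, the two translations are mutually inverse: starting from an object of the subvariety of $\mathbb{MRL}$ satisfying $(\mathrm{INV})$, Proposition~\ref{proposition3.4}(22) combined with $(\mathrm{INV})$ gives ${-}\square{-}x={-}{-}\lozenge x=\lozenge x$, so the originally given $\lozenge$ coincides with the one recovered by our definition. The main technical obstacle is orchestrating the M6 chain: one must first identify that ${-}\square y$ is $\square$-fixed in order to expose the form $\square a\rightarrow b$ required by M7, and then invoke involutive duality twice to reach the desired right-hand side.
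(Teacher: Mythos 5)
Your proposal is correct and follows essentially the same route as the paper: both directions define $\lozenge x:={-}\square{-}x$, verify (M2) by applying (M7) with second argument $0$, and establish (M6) by contraposing $x\rightarrow\square y$ to ${-}\square y\rightarrow{-}x$, using that ${-}\square y$ is a $\square$-fixed point to invoke (M7), and contraposing back. Your explicit check that the two translations are mutually inverse (via Proposition~\ref{proposition3.4}(22) and $(\mathrm{INV})$) is a small completeness detail the paper leaves implicit.
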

\begin{proof} Let  $(L,\square,\lozenge)$ be a monadic residuated lattice that satisfies ${--}x=x$. Then it directly follows from Definition \ref{definition3.1}(M1), (M3) and (M7) that  $(L,\square)$ is a monadic involutive residuated lattice.

%(M11):\ By (E2) and $\neg\lozenge \neg x=\square x$, we have
%\begin{center} $\square(\neg x\rightarrow x)=\neg\lozenge\neg(\neg x\rightarrow x)=\neg\lozenge(\neg x\odot \neg x)=\neg(\lozenge \neg x\odot \lozenge\neg x)=\neg\square x\rightarrow\square x$.
%\end{center}%

Conversely, let $(L,\square)$ be a monadic involutive residuated lattice and define $\lozenge x := -\square - x$. Then we show that $(L,\square,\lozenge)$ is a monadic residuated lattice. Notice that (M1), (M3) and (M7) are directly follows from Definition \ref{definition3.5}.

(M2):\ By Definition \ref{definition3.1} (M1) and (M7), we have
\begin{center} $\square{-}\square{-}x=\square(\square{-}x\rightarrow 0)=\square{-x}\rightarrow \square0=\square{-x}\rightarrow 0={-}\square{-x}$,
\end{center} which implies
\begin{center} $\square \lozenge x=\lozenge x$.
\end{center}

(M6):\ In order to prove this, we first to show that
 \begin{center} $\square({-}x\rightarrow \square y)={-}\square x\rightarrow\square y$.
 \end{center}
Indeed, from Definition \ref{definition3.1} (M1) and (M7), we have
\begin{center} $\square{-}\square x={-}\square x$.
\end{center} It is worth noticing that by Definition \ref{definition3.1} (M7) and above equation again, we also have
\begin{eqnarray*} \square(x\rightarrow \square y)&=&\square({--}x\rightarrow \square y)\\&=&\square({-}\square y\rightarrow {-}x)\\&=&{-}\square y\rightarrow \square {-}x\\&=&\lozenge x\rightarrow \square y,
\end{eqnarray*} which implies

\begin{center} $\square(x\rightarrow \square y)=\lozenge x\rightarrow \square y$.
\end{center}
So $(L,\square,\lozenge)$ is a monadic residuated lattice.
\end{proof}

We recall that (\cite{Rodriguez}) an algebra $(L,\rightarrow,-,1)$
of type $(2,1,0)$ is a \emph{Wajsberg algebra} provided:

(W1)\ $1 \rightarrow x = x$,

(W2)\ $(x \rightarrow y) \rightarrow \bigl((y \rightarrow z) \rightarrow (x \rightarrow z)\bigr) = 1,$

(W3)\ $(x \rightarrow y) \rightarrow y = (y \rightarrow x) \rightarrow x,$

(W4)\ $(-y \rightarrow -x) \rightarrow (x \rightarrow y) = 1.$

Lattanzi \cite{Lattanzi,Lattanzia}
 introduced the notion of U-operators on Wajsberg algebras. The resulting class of algebras is called UW-algebras, and provide an algebraic counterpart of a fragment of the monadic infinite-valued {\L}ukasiewicz functional propositional calculus. More precisely, a UW-algebra is a pair $(L,\forall)$,  where $L$ is a Wajsberg algebra and $\forall:L\rightarrow L$ is a unary operator satisfying
 \begin{center} (U1) $\forall x\rightarrow x=1$~~~and~~~(U2) $\forall(\forall x\rightarrow y)=\forall x\rightarrow \forall y$.
 \end{center} 
It is well known that the categories of MV-algebras and bounded Wajsberg algebras are equivalent, so the definition of UMV-algebras can be stated analogously to that of UW-algebras. 
The class $\mathbb{UMV}$ forms a variety. 

 Moreover, monadic involutive residuated lattices are actually natural generalization of UMV-algebras. 
 
 \begin{remark}\label{remark3.7} 
 If $(L,\square)$ is a monadic involutive residuated lattice, then the following are equivalent:
 
 (1) $L$ is an MV-algebra,
 
 (2) $(L,\square)$ is a UMV-algebra.
\end{remark}
\begin{proof} It is straightforward that $(1)\Leftrightarrow (2)$. More precisely,

 $(1)\Rightarrow (2)$ follows from the fact that (U1) and (U2) hold because $(L,\square)$ satisfies (M1) and (M7). 
 
 $(2)\Rightarrow (1)$ is immediate.
\end{proof}

As a consequence of Remark \ref{remark3.7}, the following corollary is obtained, which will be used to discuss the relationship between Figures 1 and 2 in Section 4.

\begin{corollary}\label{corollary3.8} (1) The subvariety of the variety $\mathbb{MIRL}$ determined by the equations (DIV) and (PRE)
 is term-equivalent to the variety $\mathbb{UMV}$.
 
 (2) The subvariety of the variety $\mathbb{MRL}$ determined by the equations (INV), (DIV) and (PRE)
 is term-equivalent to the variety $\mathbb{UMV}$.
\end{corollary}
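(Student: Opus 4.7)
The plan is to reduce both parts to the characterization already obtained in Remark~\ref{remark3.7}, using Definition~\ref{definietion2.2}(4) and, for part (2), the term equivalence of Theorem~\ref{theorem3.6}.

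For part (1), I would start by unpacking what it means for an object of $\mathbb{MIRL}$ to satisfy (DIV) and (PRE). By Definition~\ref{definietion2.2}(4), a residuated lattice satisfying (INV), (DIV) and (PRE) is exactly an MV-algebra; but every object of $\mathbb{MIRL}$ is involutive, so the underlying residuated lattice of any $(L,\square)$ in this subvariety is already an MV-algebra. Remark~\ref{remark3.7} then immediately gives the first direction: such a pair $(L,\square)$ is a UMV-algebra, since the monadic operator $\square$ already obeys (M1), (M3) and (M7), and these yield (U1) and (U2) in the UMV-algebra language. The reverse direction is just as transparent: if $(L,\forall)$ is a UMV-algebra, its underlying MV-algebra is an involutive residuated lattice satisfying (DIV) and (PRE) in virtue of Definition~\ref{definietion2.2}(4), and setting $\square:=\forall$ yields an object of $\mathbb{MIRL}$ satisfying the two extra identities. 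Because the operations on either side are defined from the other by literal term substitutions (identifying $\square$ with $\forall$ and using the well-known mutual definability of the MV-algebra and residuated lattice signatures), this establishes term-equivalence in the strict sense.

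For part (2), I would invoke Theorem~\ref{theorem3.6}, which asserts that the subvariety of $\mathbb{MRL}$ defined by ${-}{-}x=x$, i.e.\ by (INV), is term-equivalent to $\mathbb{MIRL}$, with the term equivalence given by $\lozenge x := {-}\square{-}x$ in one direction and forgetting $\lozenge$ in the other. Since the equations (DIV) and (PRE) involve only the residuated lattice operations, they are preserved under this term equivalence. Hence the subvariety of $\mathbb{MRL}$ cut out by (INV), (DIV) and (PRE) is term-equivalent to the subvariety of $\mathbb{MIRL}$ cut out by (DIV) and (PRE), and applying part (1) closes the chain.

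The technical content is light: the only thing that requires care is to verify that the term equivalences compose cleanly, namely that the expression of $\lozenge$ via ${-}\square{-}$ from Theorem~\ref{theorem3.6} remains consistent when one further passes to the UMV signature through Remark~\ref{remark3.7}. This is essentially bookkeeping, since $\lozenge$ is not part of the UMV signature and the MV-algebra De Morgan laws make $\lozenge x = {-}\forall{-}x$ automatic once $\square = \forall$ is identified. Thus no genuine obstacle is anticipated; the proof is a short diagram-chase among Definition~\ref{definietion2.2}(4), Theorem~\ref{theorem3.6} and Remark~\ref{remark3.7}.
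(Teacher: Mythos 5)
Your proposal is correct and follows essentially the same route as the paper, which likewise derives part (1) from Remark~\ref{remark3.7} together with Definition~\ref{definietion2.2}(4) and part (2) by composing with the term equivalence of Theorem~\ref{theorem3.6}. The only point you (and the paper) pass over too quickly is the converse direction of part (1): the UMV axioms (U1)--(U2) do not list (M3), so to place a UMV-algebra $(L,\forall)$ in $\mathbb{MIRL}$ one must actually derive $\forall(x\wedge y)=\forall x\wedge\forall y$, which does hold but requires a short argument (e.g.\ showing that the fixed points of $\forall$ are closed under $\rightarrow$ and ${-}$, hence under $\wedge$ via the involution, and then using monotonicity of $\forall$) rather than being immediate.
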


\section{Kalman structures derived from monadic residuated distributive lattices}\label{section4.}

\subsection{Monadic c-differential residuated lattices}

In this section, we introduce the notion of monadic c-differential residuated lattices and prove that the classes of monadic c-differential residuated distributive lattices satisfying $\mathbf{CK}$ and monadic residuated distributive lattices are in one to one correspondence. Indeed, we also construct two universal quantifiers on the c-differential residuated distributive lattices from a monadic residuated distributive lattice, which will play an important role in proving the main results of this section.

Let $(L,\square,\lozenge)$ be a monadic residuated lattice. Then we define a unary operator

\begin{center} $\square_K:\mathbf{K}(L)\rightarrow \mathbf{K}(L)$
\end{center}
such that
\begin{center} $\square_K(x,y)=(\square x, \lozenge y)$,
\end{center}
for all $x,y\in \mathbf{K}(L)$.

\begin{theorem}\label{theorem4.1} Let $(L,\square,\lozenge)$ be a monadic residuated distributive lattice. Then for any $(x,y),(x_1,y_1),(x_2,y_2)\in \mathbf{K}(L)$, we have

(1) $\square_K$ is well defined, i.e., $\square_K(x,y)\in \mathbf{K}(L)$,

(2) $\square_K(0,1)=(0,1)$,

(3) $\square_K(c)=c$,

(4) $\square_K(1,0)=(1,0)$,

(5) $\square_K(x,y)\leadsto (x,y)=(1,0)$,

(6) $\square_K((x_1,y_1)\leadsto (x_2,y_2))\leq \square_K(x_1,y_1)\leadsto \square_K(x_2,y_2)$,

(7) $\square_K(x_1,y_1)\cap (x_2,y_2))=\square_K(x_1,y_1)\cap \square_K(x_2,y_2)$,

(8) $\square_K(\square_K(x_1,y_1)\leadsto (x_1,y_2))=\square_K(x_1,y_1)\leadsto \square_K(x_2,y_2)$,

(9) $\square_K(x_1,y_1)\otimes \square_K(x_2,y_2)\leq \square_K((x_1,y_1)\otimes (x_2,y_2))$,

(10) $(x_1,y_1)\leq (x_2,y_2)$ implies $\square_K(x_1,y_1)\leq \square_K(x_2,y_2)$,

(11) $\square_K(\square_K(x_1,y_1)\leadsto \square_K(x_2,y_2))=\square_K(x_1,y_1)\leadsto \square_K(x_2,y_2)$,

(12) $\square_K(\neg\square_K(\neg(x,y)))=\neg\square_K(\neg(x,y))$,

(13) $\square_K(\square_K(x,y))=\square_K(x,y)$,

(14) $\square_K((x,y)\cap c)=\square_K(x,y)\cap c$,

(15) $\square_K((x,y)\cup c)=\square_K(x,y)\cup c$,

(16) $\square_K((x,y)\otimes c)=\square_K(x,y)\otimes c$.
\end{theorem}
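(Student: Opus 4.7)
The strategy is to verify each of the sixteen items by expanding $\square_K(x,y)=(\square x,\lozenge y)$ componentwise, unfolding the corresponding operation on $\mathbf{K}(L)$, and then applying the defining axioms (M1)--(M7) together with the derived identities in Proposition \ref{proposition3.4}. I would split the items into three blocks and treat the easier ones first so that the needed identities are built up in advance.

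The first block collects the structural items (1)--(4), (7), (10), (12), (13), and (5). For the well-definedness claim (1), the condition $(x,y)\in\mathbf{K}(L)$ gives $y\leq {-}x$; applying $\lozenge$ and using that ${-}\square x$ is a fixed point of $\lozenge$ (a consequence of Proposition \ref{proposition3.4}(4) and (21) via ${-}\square x=\square({-}\square x)=\lozenge\square({-}\square x)=\lozenge({-}\square x)$) yields $\lozenge y\leq {-}\square x$, i.e. $\square x\odot\lozenge y=0$. Items (2)--(4) are immediate from (M4), (M5), and Proposition \ref{proposition3.4}(9, 10). The remaining items of this block reduce cleanly: (7) uses (M3) and Proposition \ref{proposition3.4}(15); (10) follows from monotonicity of $\square$ and $\lozenge$ (Proposition \ref{proposition3.4}(2, 3)); (13) uses idempotency (Proposition \ref{proposition3.4}(7, 8)); (12) is the formula $\sim(x,y)=(y,x)$ together with (M2) and Proposition \ref{proposition3.4}(4); and (5) follows because $\square x\leq x$ and $y\leq\lozenge y$ force the first coordinate to be $1$ while the second coordinate vanishes via $y\odot\square x\leq y\odot x=0$.

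The second block is the technical core and handles the interactions of $\square_K$ with $\leadsto$ and $\otimes$, items (6), (8), (9), and (11). For (6) the first coordinate splits into $\square(x_1\to x_2)\leq \square x_1\to\square x_2$, obtained from $x_1\to x_2\leq \square x_1\to x_2$ and (M7), and $\square(y_2\to y_1)\leq\lozenge y_2\to\lozenge y_1$, obtained from $y_2\to y_1\leq y_2\to\lozenge y_1$ and Proposition \ref{proposition3.4}(6); the second coordinate is exactly Proposition \ref{proposition3.4}(19). Item (8) upgrades this to equality under $\square_K$, thanks to the sharper identity $\lozenge(y\odot\square x)=\lozenge y\odot\square x$ of Proposition \ref{proposition3.4}(18) which makes the second coordinate collapse. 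Item (11) then follows by applying $\square_K$ to (8) and invoking Proposition \ref{proposition3.4}(20), (5), and (8) to recognise each coordinate as a fixed point of the relevant quantifier. Finally, (9) splits into Proposition \ref{proposition3.4}(17) for the first coordinate and, for the second, a combination of monotonicity of $\lozenge$ across $\wedge$ with Proposition \ref{proposition3.4}(16).

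The third block treats the interactions with the center $c=(0,0)$, items (14)--(16). For (14) and (15), the pair $(x,y)\cap c$ collapses to $(0,y)$ and $(x,y)\cup c$ to $(x,0)$, so the identities follow from (M4) and Proposition \ref{proposition3.4}(10). I expect the main obstacle to lie in (16): unfolding gives $(x,y)\otimes c=(0,{-}x)$ and $\square_K(x,y)\otimes c=(0,{-}\square x)$, so the identity reduces to the equation $\lozenge({-}x)={-}\square x$ in $L$. One inclusion is immediate from Proposition \ref{proposition3.4}(19), since $\lozenge({-}x)\odot\square x\leq\lozenge({-}x\odot x)\leq\lozenge 0=0$ yields $\lozenge({-}x)\leq {-}\square x$. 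The reverse inclusion is the delicate step, and I would attack it by means of the residuation adjunction $\lozenge u\leq v\Leftrightarrow u\leq\square v$ of Proposition \ref{proposition3.4}(25), exploiting the fact established in block one that ${-}\square x$ is a fixed point of both $\square$ and $\lozenge$ by Proposition \ref{proposition3.4}(4) and (21).
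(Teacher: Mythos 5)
Your handling of items (1)--(15) is essentially correct and, up to harmless reroutings, parallels the paper's proof: you establish (9) componentwise from Proposition \ref{proposition3.4}(16)--(17) where the paper derives it from (6) by residuation, you get (10) directly from monotonicity of $\square$ and $\lozenge$ where the paper uses (4) and (6), and you obtain (11) by applying $\square_K$ to (8) together with idempotency where the paper recomputes both sides; all of these variants go through.

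The genuine problem is item (16), and you have correctly located it without being able to close it. Your unfolding is right: $(x,y)\otimes c=(0,{-}x)$ and $\square_K(x,y)\otimes c=(0,{-}\square x)$, so (16) amounts to the identity $\lozenge({-}x)={-}\square x$ in $L$, and your argument for $\lozenge({-}x)\leq{-}\square x$ via Proposition \ref{proposition3.4}(18) is fine. But the reverse inequality ${-}\square x\leq\lozenge({-}x)$, which you defer to the adjunction of Proposition \ref{proposition3.4}(25), is not derivable from (M1)--(M7): that adjunction characterises statements of the form $\lozenge u\leq v$, not $v\leq\lozenge u$, so it gives no purchase here, and in fact the inequality fails. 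Take $L=[0,1]^{\mathbb{N}}$ with the pointwise product t-norm, $\square f\equiv\inf_n f(n)$ and $\lozenge f\equiv\sup_n f(n)$; this is a monadic residuated distributive lattice, and for $f(n)=1/(n+1)$ one has ${-}f\equiv 0$, hence $\lozenge({-}f)=0$, while $\square f=0$ gives ${-}\square f=1$. Concretely, for $(f,0)\in\mathbf{K}(L)$ this yields $\square_K((f,0)\otimes c)=(0,0)$ but $\square_K(f,0)\otimes c=(0,1)$, so no argument along your proposed lines (or any other) can complete this item. Be aware that the paper's own proof of (16) only appears to succeed because it miscomputes $(x,y)\otimes(0,0)$ as $(0,y)$ instead of $(0,{-}x)$; your honest reduction therefore exposes a defect in the statement itself, which in general holds only as the inequality $\square_K(x,y)\otimes c\leq\square_K((x,y)\otimes c)$ unless one adds a hypothesis forcing $\lozenge{-}x={-}\square x$ (as in the involutive case).
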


\begin{proof}(1):\ If $(x,y)\in \mathbf{K}(L)$, then $x\leq {-}y$. By Propositions \ref{proposition3.4}(2) and \ref{proposition3.4}(19), we have 
\begin{center} $\square x\leq \square {-}y={-}\lozenge y$,
\end{center} and hence $\square x\odot \lozenge y=0$, that is $(\square x,\lozenge y)\in \mathbf{K}(L)$, which implies that $\square_K$ is well defined.

(2):\ From Definition \ref{definition3.1}(M4) and Proposition \ref{proposition3.4}(9), we have

\begin{center} $\square_K(0,1)=(\square 0,\lozenge 1)=(0,1)$.
\end{center}

(3):\ From Definition \ref{definition3.1}(M4)  and Proposition \ref{proposition3.4}(10), we have

\begin{center} $\square_K(c)=\square_K(0,0)=(\square 0,\lozenge 0)=(0,0)=c$.
\end{center}

(4):\ From Definition \ref{definition3.1}(M5) and Proposition \ref{proposition3.4}(10), we have 
\begin{center} $\square_K(1,0)=(\square 1,\lozenge 0)=(1,0)$.
\end{center}

(5):\ If $(x,y)\in \mathbf{K}(L)$, then by Proposition \ref{proposition3.4} (1) we have 
\begin{center}
$\square x\odot y\leq \square x\odot \lozenge y=0$,
\end{center}
and hence
\begin{center} $\lozenge x\odot y=0$, 
\end{center}
which implies
\begin{eqnarray*}
               \square_K(x,y)\leadsto (x,y)&=& (\square x, \lozenge y)\leadsto(x,y)\\&=& ((\square x\rightarrow x)\wedge(y\rightarrow \lozenge y),\square x\odot y)\\&=&(1,0).
                \end{eqnarray*}

(6):\ Let $(x_1,y_1),(x_2,y_2)\in \mathbf{K}(L)$. Then by Proposition \ref{proposition3.4}(17), we have
\begin{eqnarray*}
               \square_K((x_1,y_1)\leadsto(x_2,y_2)) &=& \square_K((x_1\rightarrow x_2)\wedge(y_2\rightarrow y_1), y_2\odot x_1)\\ &=& (\square((x_1\rightarrow x_2)\wedge(y_2\rightarrow y_1)),\lozenge (y_2\odot x_1))\\ &\leq & ((\square x_1\rightarrow \square x_2)\wedge (\lozenge y_2\rightarrow \lozenge y_1), \lozenge y_2\odot \square x_1).
               \end{eqnarray*}
On the other hand, we have
\begin{eqnarray*}
               \square_K(x_1,y_1)\leadsto \square_K(x_2,y_2)&=& (\square x_1,\lozenge y_1)\leadsto (\square x_2,\lozenge y_2)\\ &=& ((\square x_1\rightarrow \square x_2)\wedge (\lozenge y_2\rightarrow \lozenge y_1), \lozenge y_2\odot \square x_1).
               \end{eqnarray*}
                              
(7):\ Let $(x_1,y_1),(x_2,y_2)\in \mathbf{K}(L)$. Then by Definition \ref{definition3.1}(M3) and Proposition \ref{proposition3.4}(15), we have
\begin{eqnarray*}
               \square_K((x_1,y_1)\sqcap(x_2,y_2))&=& \square_K(x_1\wedge x_2,y_1\vee y_2)\\ &=& (\square(x_1\wedge x_2),\lozenge(y_1\vee y_2))\\ &=& (\square x_1\wedge \square x_2,\lozenge y_1\vee \lozenge y_2)\\ &=& (\square x_1,\lozenge y_1)\sqcap (\square x_2, \lozenge y_2)\\ &=& \square_K(x_1,y_1)\sqcap \square_K(x_2,y_2).
               \end{eqnarray*}

(8):\ Let $(x_1,y_1),(z,w)\in \mathbf{K}(L)$. Then by Proposition \ref{proposition3.4}(18), we have
\begin{eqnarray*}
               \square_K(\square_K(x_1,y_1)\rightsquigarrow(x_2,w))&=& \square_K((\square x_1,\lozenge y_1)\rightsquigarrow(x_2,w))\\ &=& \square_K((\square x_1\rightarrow x_2)\wedge(w\rightarrow\lozenge y_1),\square x_1\odot w)\\ &=& (\square((\square x_1\rightarrow x_2)\wedge(w\rightarrow\lozenge y_1)),\lozenge(\square x_1\odot w))\\ &=& ((\square x_1\rightarrow\square x_2)\wedge (\square w\rightarrow\lozenge y_1),\square x_1\odot \lozenge w).
               \end{eqnarray*}
On the other hand, we have
\begin{eqnarray*}
               \square_K(x_1,y_1)\rightsquigarrow \square_K(x_2,y_2)&=& (\square x_1,\lozenge y_1)\rightsquigarrow (\square x_2,\lozenge y_2)\\ &=& ((\square x_1\rightarrow \square x_2)\wedge (\lozenge y_2\rightarrow\lozenge y_1),\square x_1\odot\lozenge y_2).
               \end{eqnarray*}
               
(9):\  By (6) and residuation, we have
\begin{center} $\square_K((x_1,y_1)\leadsto (x_2,y_2))\otimes \square_K(x_1,y_1)\leq \square_K(x_2,y_2)$.
\end{center}
Taking $(x_2,y_2)=(x_2,y_2)\otimes (x_1,y_1)$ in the above equation, we have
\begin{eqnarray*} \square_K(x_1,y_1)\otimes \square_K(x_2,y_2)&=&\square_K(x_2,y_2)\otimes \square_K(x_1,y_1)\\&\leq&  \square_K((x_1,y_1)\leadsto ((x_2,y_2)\otimes (x_1,y_1)))\otimes \square_K(x_1,y_1)\\&\leq& \square_K((x_1,y_1)\otimes (x_2,y_2)),
\end{eqnarray*} which implies
\begin{center} $\square_K(x_1,y_1)\otimes \square_K(x_2,y_2)\leq \square_K((x_1,y_1)\otimes (x_2,y_2))$.
\end{center}

(10):\ If $(x_1,y_1)\leq (x_2,y_2)$, then by (4) and (6), we have
\begin{center}$\square_K((x_1,y_1)\leadsto (x_2,y_2))=\square_K(1,0)=(1,0)\leq \square_K(x_1,y_1)\leadsto \square_K(x_2,y_2)$,
\end{center} which implies $\square_K(x_1,y_1)\leq \square_K(x_2,y_2)$.

(11):\ Let $(x_1,y_1),(x_2,y_2)\in \mathbf{K}(L)$. Then by Proposition \ref{proposition3.4}(18), we have
\begin{eqnarray*}
                \square_K( \square_K(x_1,y_1)\leadsto \square_K(x_2,y_2))&=&  \square_K((\square x_1,\lozenge y_1)\leadsto (\square x_2,\lozenge y_2)\\ &=& ((\square x_1\rightarrow \square x_2)\wedge(\lozenge y_2\rightarrow \lozenge y_1),\square x_1\odot \lozenge y_2)\\ &=& (\square(\square x_1\rightarrow \square x_2)\wedge(\lozenge y_2\rightarrow \lozenge y_1)),\lozenge (\square x_1\odot \lozenge y_2)\\ &=& ((\square x_1 \rightarrow \square x_2)\wedge (\lozenge y_2\rightarrow \lozenge y_1),\square x_1\odot \lozenge y_2).
               \end{eqnarray*}
On the other hand, we have
\begin{eqnarray*}
                \square_K(x_1,y_1)\leadsto  \square_K(x_2,y_2)&=& (\square x_1,\lozenge y_1)\leadsto (\square x_2,\lozenge y_2)\\ &=& ((\square x_1 \rightarrow \square x_2)\wedge (\lozenge y_2\rightarrow \lozenge y_1),\square x_1\odot \lozenge y_2),
               \end{eqnarray*} which implies 
\begin{center} $\square_K( \square_K(x_1,y_1)\leadsto \square_K(x_2,y_2))=\square_K(x_1,y_1)\leadsto  \square_K(x_2,y_2)$.
\end{center}

(12):\ Let $(x,y)\in \mathbf{K}(L)$. Then by Definition \ref{definition3.1}(M2), we have
\begin{eqnarray*}
               \square_K(\neg\square_K(\neg(x,y)))&=& \square_K(\neg\square_K(y,x))\\&=& \square_K(\square_K(y,x)\rightsquigarrow(1,1))\\&=&\square_K((\square y,\lozenge x)\rightsquigarrow (1,1))\\&=&\square_K(((\square y\rightarrow 1)\wedge (1\rightarrow\lozenge x),\lozenge x))\\&=&\square_K((\lozenge x,\lozenge x))\\&=&(\square\lozenge x, \lozenge x)\\&=&(\lozenge x, \lozenge x).
               \end{eqnarray*}
On the other hand, we have
\begin{eqnarray*}
               \neg\square_K(\neg(x,y))&=& \neg\square_K(y,x)\\&=& \square_K(y,x)\rightsquigarrow(1,1)\\&=&(\square y,\lozenge x)\rightsquigarrow (1,1)\\&=&((\square y\rightarrow 1)\wedge (1\rightarrow\lozenge x),\lozenge x)\\&=&(\lozenge x,\lozenge x).
               \end{eqnarray*}
               
(13):\ Let $(x,y)\in \mathbf{K}(L)$. Then by Propositions \ref{proposition3.4}(7) and (8), we have
\begin{eqnarray*}
\square_K(\square_K(x,y))&=&\square_K(\square x, \lozenge y)\\&=& (\square\square x,\lozenge \lozenge y)\\&=&(\square x,\lozenge y)\\&=& \square_K(x,y).
\end{eqnarray*}
               
(14):\ Let $(x,y)\in \mathbf{K}(L)$. Then by Definition \ref{definition3.1}(M4), we have
\begin{eqnarray*}
\square_K((x,y)\cap c)&=&\square_K((x,y)\cap(0,0))\\&=& \square_K(0,y)\\&=&(\square 0, \lozenge y)\\&=& (0,\lozenge y)\\&=&\square_K(x,y)\cap c.
\end{eqnarray*}

(15):\ Let $(x,y)\in \mathbf{K}(L)$. Then by Proposition \ref{proposition3.4}(10), we have
\begin{eqnarray*}
\square_K((x,y)\cup c)&=&\square_K((x,y)\cup(0,0))\\&=&\square_K(x,0)\\&=&(\square x, \lozenge 0)\\&=&(\square x,0)\\&=&\square_K(x,y)\cup c.
\end{eqnarray*}

(16):\ Let $(x,y)\in \mathbf{K}(L)$. Then by Definition \ref{definition3.1}(M4), we have
\begin{eqnarray*}
\square_K((x,y)\otimes c)&=&\square_K((x,y)\otimes(0,0))\\&=&\square_K(0,y)\\&=&(0,\lozenge y)\\&=&(\square 0, \lozenge y)\otimes c\\&=&\square_K((x,y)\otimes c.
\end{eqnarray*}
\end{proof}

\begin{remark}\label{remark4.2} It should be pointed out here that if $L$ is a residuated distributive lattice, then $\mathbf{K}(L)$ is a special class of an involutive residuated lattice, with Theorems \ref{theorem4.1}(5),(7) and (8), which forms a monadic involutive residuated lattice by Definition \ref{definition3.5}.
\end{remark}

Inspired by Definition \ref{definition3.5} and Theorem \ref{theorem4.1}(5),(7) and (8), and combined by the compatibility of universal quantifiers and the center element $c$ presented by Theorem \ref{theorem4.1}(14)-(16), we introduce the notion of center universal quantifier on c-differential residuated lattices, and the resulting class of algebras will be called monadic c-differential residuated lattices.

\begin{definition}\label{definition4.3} A \emph{monadic c-differential residuated lattice} is a pair 
\begin{center} $(\mathcal{L},\cap,\cup,\otimes,\neg, \square_c,0,c,1,)$
\end{center} where
$(\mathcal{L},\cap,\cup,\otimes,\neg,0,c,1)$ is a c-differential residuated lattice and in addition $\square_c$ satisfies the identities: 

($\square_c1$)\ $\square_c c=c$,

($\square_c2$)\ $\square_cx\leadsto x=1$,

($\square_c3$)\ $\square_c(x\cap y)=\square_c x\cap \square_c y$,

($\square_c 4$)\ $\square_c(\square_c x\leadsto y)=\square_c x\leadsto \square_c y$,

($\square_c 5$)\ $\square_c(x\cap c)=\square_cx\cap c$,

($\square_c 6$)\ $\square_c(x\cup c)=\square_c x\cup c$,

($\square_c 7$)\ $\square_c(x\otimes c)=\square_c x\otimes c$.

\end{definition}

The $\square_c:\mathcal{L}\rightarrow \mathcal{L}$ is called a center universal quantifier on a c-differential residuated lattice $\mathcal{L}$.

The class $\mathbb{MDRDL}$ of monadic c-differential residuated lattices forms a variety.  

\begin{remark}\label{remark4.4.} (1) Taking $\mathcal{L}=\mathbf{K}(L)$ when $L$ is a residuated distributive lattice, then by Theorem \ref{theorem4.1} (5),(7),(8),(14)-(16), we have that $\square_K$ is a center universal quantifier on a c-differential residuated lattice on $\mathbf{K}(L)$, and it can be seen as the image of the universal and existential quantifiers $\forall$ and $\exists$, under the equivalence kalman functor $\mathbf{K}$. 

(2) Note that c-differential residuated lattices are the special kinds of involutive residuated lattice, and hence the notion of universal quantifiers is defined in Definition \ref{definition3.5}  can be also directly applied from involutive residuated lattices to c-differential residuated lattices. It is remarked here that universal quantifiers and center universal quantifiers 
 on c-differential residuated lattices are not the same. For example, let $L$ be a residuated distributive lattice and $\mathbf{K}(L)$ be the corresponding Kalman structure, 
\begin{center}$\square_{\neg}:\mathbf{K}(L)\rightarrow \mathbf{K}(L)$ 
\end{center} be a unary operator defined by 
\begin{center} $\square_{\neg}(x,y)=(x,\neg x)$,
\end{center} for any $(x,y)\in \mathbf{K}(L)$. Then $\square_{\neg}$ is a universal quantifier of Definition \ref{definition3.5} on $\mathbf{K}(L)$, but it is not a center universal quantifier of Definition \ref{definition4.3}.

 More precisely, if $(x,y)\in \mathbf{K}(L)$, then $x\odot \neg x=0$, which means that $\square_{\neg}$ is well defined.

(M1)\ If $(x,y)\in \mathbf{K}(L)$, then $ x\odot y=0$, and hence
\begin{eqnarray*}
               \square_{\neg}(x,y)\leadsto (x,y)&=& (x,{\neg}x)\leadsto (x,y)\\&=&((x\rightarrow x)\wedge(y\rightarrow {-}x),x\odot y)\\&=&(1,0).
               \end{eqnarray*}

(M3) \ Let $(x_1,y_1),(x_2,y_2)\in \mathbf{K}(L)$. Then
\begin{eqnarray*}
               \square_{\neg}((x_1,y_1)\cap(x_2,y_2))&=& \square_{\neg}(x_1\wedge x_2,y_1\vee y_2)\\ &=& (x_1\wedge x_2,{-}(x_1\wedge x_2))\\ &=& (x_1\wedge x_2,{\neg} x_1\vee {\neg} x_2 )\\ &=& (x_1,{\neg}x_1)\cap (x_2,{\neg}x_2)\\ &=& \square_{\neg}(x_1,y_1)\cap \square_{\neg}(x_2,y_2).
               \end{eqnarray*}

(M7)\ Let $(x_1,y_1),(x_2,y_2)\in \mathbf{K}(L)$. First noticing that
 \begin{center} $x_1\odot (x_1\rightarrow x_2)\odot y_2\leq x_2\odot y_2=0$,
  \end{center}
 which directly implies $x_1\rightarrow x_2\leq y_2\rightarrow {-}x_1$ by Definition 2.1(3). Then we have
\begin{eqnarray*}
               \square_{\neg}(\square_{\neg}(x_1,y_1)\leadsto(x_2,y_2))&=& \square_{\neg}((x_1,{-}x_1)\leadsto (x_2,y_2))\\ &=& \square_{\neg}((x_1\rightarrow x_2)\wedge(y_2\rightarrow {-}x_1),x_1\odot y_2)\\ &=& \square_{\neg}(x_1\rightarrow x_2),x_1\odot y_2)\\ &=& (x_1\rightarrow x_2,{-}(x_1\rightarrow x_2))\\ &=& (x_1\rightarrow x_2,x_1\odot {-}x_2).
               \end{eqnarray*}
On the other hand, we have
\begin{eqnarray*}
               \square_{\neg}(x_1,y_1)\leadsto \square_{\neg}(x_2,y_2)&=& (x_1,{-}x_1)\leadsto (x_2,{-}x_2)\\ &=& ((x_1\rightarrow x_2)\wedge ({-}x_2\rightarrow {-}x_1), x_1\odot {-}x_2)\\ &=& (x_1\rightarrow x_2,x_1\odot {-}x_2).
               \end{eqnarray*}            
Then by Definition \ref{definition3.5} we have that $\square_{\neg}$ is a universal quantifier on a c-differential residuated lattice $\mathbf{K}(L)$.
However, $\square_{\neg}$ is not a center universal quantifier of Definition \ref{definition4.3} on a center universal quantifier, since  Definition \ref{definition4.3} ($\square_c1$) does not hold generally,
\begin{center} $\square_{\neg}(c)=(0,0)=\square_{\neg}(0,\neg 0)=(0,1)\neq (0,0)=c$.
\end{center}
\end{remark}

\begin{proposition}\label{proposition4.5} In any monadic c-differential residuated lattice $(\mathcal{L},\cap,\cup,\otimes,\neg, \square_c,0,c,1)$, the following properties hold:

($\square_c 8$)\ $\square_c(1)=1$,

($\square_c 9$)\ $\square_c(0)=0$,

($\square_c 10$)\ $\square_c\square_c x=\square_c x$,

($\square_c 11$)\ $x\leq y$ implies $\square_c x\leq \square_c y$,

($\square_c 12$)\ $\square_c x\otimes \square_c y\leq \square_c(x\otimes y)$,

($\square_c 13$)\ $\square_c\neg \square_c x=\neg \square_c x$,

($\square_c 14$)\ $\square_c(\square_c x\leadsto \square_c y)=\square_c x\leadsto \square_c y$.
\end{proposition}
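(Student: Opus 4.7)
The plan is to derive the seven identities sequentially, using only the axioms in Definition 4.3 together with the standard residuation $x\otimes y\le z$ iff $x\le y\leadsto z$ available in any (involutive) residuated lattice. A natural order is ($\square_c 8$), ($\square_c 9$), ($\square_c 10$), ($\square_c 11$), ($\square_c 12$), ($\square_c 13$), ($\square_c 14$), since several items re-use earlier ones.

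First I would note the general fact $a\leadsto b=1$ iff $a\le b$, so ($\square_c 2$) immediately yields $\square_c x\le x$ for every $x$; taking $x=0$ gives ($\square_c 9$). For ($\square_c 8$), I instantiate ($\square_c 4$) at $x=y=1$: since $\square_c 1\le 1$ forces $\square_c 1\leadsto 1=1$, the left-hand side becomes $\square_c 1$ while the right-hand side equals $\square_c 1\leadsto\square_c 1=1$, hence $\square_c 1=1$. Idempotence ($\square_c 10$) then follows by taking $y:=\square_c x$ in ($\square_c 4$): the left-hand side collapses to $\square_c 1=1$ via ($\square_c 8$), giving $\square_c x\le\square_c\square_c x$, and the reverse inequality is the instance of $\square_c u\le u$ with $u:=\square_c x$. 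For monotonicity ($\square_c 11$), if $x\le y$ then $x=x\cap y$, and ($\square_c 3$) gives $\square_c x=\square_c x\cap\square_c y\le\square_c y$.

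The substantive step is ($\square_c 12$). By residuation it suffices to prove $\square_c x\le\square_c y\leadsto\square_c(x\otimes y)$, and by ($\square_c 4$) the right-hand side equals $\square_c(\square_c y\leadsto(x\otimes y))$. From $x\otimes y\le x\otimes y$ and residuation I get $x\le y\leadsto(x\otimes y)$; combining this with $\square_c y\le y$ and the antitonicity of $\leadsto$ in its first argument yields $x\le\square_c y\leadsto(x\otimes y)$. Applying the now-established monotone operator $\square_c$ finishes the argument. For ($\square_c 13$), I would use that in an involutive residuated lattice $u\leadsto 0=\neg(u\otimes\neg 0)=\neg(u\otimes 1)=\neg u$, so
$$\square_c\neg\square_c x=\square_c(\square_c x\leadsto 0)=\square_c x\leadsto\square_c 0=\square_c x\leadsto 0=\neg\square_c x$$
by ($\square_c 4$) and ($\square_c 9$). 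Finally, ($\square_c 14$) is just ($\square_c 4$) composed with ($\square_c 10$): $\square_c(\square_c x\leadsto\square_c y)=\square_c x\leadsto\square_c\square_c y=\square_c x\leadsto\square_c y$.

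The only step that requires any real thought is ($\square_c 12$), where residuation, the antitonicity of $\leadsto$ in its first argument, and ($\square_c 4$) must be assembled in the right order; the remaining identities are essentially one-step consequences of the axioms, with ($\square_c 13$) resting on the identification of the involution with the residual into $0$ in an involutive residuated lattice.
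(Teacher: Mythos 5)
Your proof is correct and matches what the paper intends: the paper's own ``proof'' is a one-line deferral to the analogous facts (M4), (M5) and Proposition \ref{proposition3.4}(2),(7),(17),(20),(21), and your derivations are exactly those arguments transported to the $\square_c$-axioms of Definition \ref{definition4.3}, with ($\square_c 12$) assembled via residuation in the standard way. The only point needing care is your use of $\neg u = u\leadsto 0$ in ($\square_c 13$), which is indeed valid here because in the paper's involutive residuated lattices the monoid unit $1$ coincides with the lattice top and $\neg$ is a dual lattice automorphism, so $u\leadsto 0=\neg(u\otimes\neg 0)=\neg(u\otimes 1)=\neg u$.
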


\begin{proof} The proofs of ($\square_c 8$)-($\square_c 14$) can be directly deduced from Definition \ref{definition3.1} and Proposition \ref{proposition3.4}. 
\end{proof}

\begin{theorem}\label{theorem4.6} Let $L$ be a residuated distributive lattice. Then
\[
\varphi : (L,\square ,\lozenge) \; \longmapsto \; (\mathbf{K}(L),\square_K)
\]
defines a mapping from the class of  monadic expansions of $L$ to the class of monadic c-differential residuated distributive lattices expansions of $\mathbf{K(L)}$, which satisfies the following two properties:

   (1)~~If $(L, \square_1, \lozenge_1)$ and $(L, \square_2, \lozenge_2)$ are two monadic expansions of $L$, then
    \[
    \mathbf{K}(L, \square_1, \lozenge_1) \cong \mathbf{K}(L, \square_2, \lozenge_2)
    \]
    as monadic c-differential residuated distributive lattices expansions of $\mathbf{K(L)}$, which shows that the image under $\mathbf{K}$ of any monadic expansion of $L$ is essentially unique.

   (2)~~If $(\mathbf{K}(L), \square_K)$ is a monadic c-differential residuated distributive lattice, then there exist the binary operators $\square, \lozenge$ on $L$ such that $(L, \square, \lozenge)$ is a monadic residuated distributive lattice and
    \[
    \varphi(L, \square, \lozenge) \cong (\mathbf{K}(L), \square_K).
    \]

Consequently, $\varphi$ establishes a one-to-one correspondence (up to isomorphism) between the class of monadic expansions of $L$ to the class of monadic c-differential residuated distributive lattices expansions of $\mathbf{K(L)}$.
\end{theorem}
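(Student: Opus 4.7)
The plan is to verify $\varphi$ is well-defined, address part (1) via functoriality, and then construct an essentially inverse assignment to establish part (2).

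First, items (3), (5), (7), (8), (14)--(16) of Theorem \ref{theorem4.1} show that $\square_K$ satisfies precisely the seven defining identities ($\square_c 1$)--($\square_c 7$) of Definition \ref{definition4.3}. Combined with the Castiglioni-Menni-Sagastume result that $\mathbf{K}(L)\in\mathbb{DRDL'}$, this confirms $(\mathbf{K}(L),\square_K)\in\mathbb{MDRDL'}$ whenever $(L,\square,\lozenge)\in\mathbb{MRDL}$. Functoriality of $\mathbf{K}$ then ensures that any isomorphism of monadic expansions $(L,\square_1,\lozenge_1)\cong(L,\square_2,\lozenge_2)$ lifts to an isomorphism $\mathbf{K}(L,\square_1,\lozenge_1)\cong\mathbf{K}(L,\square_2,\lozenge_2)$ in $\mathbb{MDRDL'}$ intertwining the two $\square_K$'s componentwise, which yields part (1).

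For part (2), given $(\mathbf{K}(L),\square_K)\in\mathbb{MDRDL'}$, I reconstruct $\square$ and $\lozenge$ on $L$ as follows. Since $(x,y)\cup c=(x,0)$ and $(x,y)\cap c=(0,y)$ with $c=(0,0)$, axioms ($\square_c 6$) and ($\square_c 5$) force
\[
\pi_1\square_K(x,y)=\pi_1\square_K(x,0)\quad\text{and}\quad\pi_2\square_K(x,y)=\pi_2\square_K(0,y),
\]
which justifies the definitions $\square x:=\pi_1\square_K(x,0)$ and $\lozenge y:=\pi_2\square_K(0,y)$. These yield $\square_K(x,y)=(\square x,\lozenge y)$ on the nose, so $\varphi(L,\square,\lozenge)\cong(\mathbf{K}(L),\square_K)$. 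It remains to check that $(L,\square,\lozenge)$ satisfies the monadic residuated lattice axioms (M1)--(M3), (M6)--(M7) of Definition \ref{definition3.1}: (M1) comes from ($\square_c 2$); (M3) from ($\square_c 3$); (M7) from ($\square_c 4$) applied to the pair $(x,0)\leadsto(y,0)$; and (M2) from ($\square_c 13$) in Proposition \ref{proposition4.5} applied to $(x,0)$, which forces $\square_K(\lozenge x,0)=(\lozenge x,0)$.

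The main technical obstacle is (M6): $\square(x\to\square y)=\lozenge x\to\square y$. This identity does not arise from a single specialization of ($\square_c 4$), since the residual $(x,y)\leadsto(z,w)=((x\to z)\wedge(w\to y),w\odot x)$ in $\mathbf{K}(L)$ mixes both coordinates and the first coordinate of $\square_K u$ always produces a $\square$ rather than a $\lozenge$. The route I would take is indirect: first derive Proposition \ref{proposition3.4}(6), $\square(x\to\lozenge y)=\lozenge x\to\lozenge y$, by specializing ($\square_c 4$) to $(0,x)\leadsto(0,y)$ and reading off first coordinates; then derive Proposition \ref{proposition3.4}(4), $\lozenge\square y=\square y$, from ($\square_c 13$) applied to $(0,y)$. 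Substituting $\lozenge\square y=\square y$ into Proposition \ref{proposition3.4}(6) with $y$ replaced by $\square y$ then yields (M6). Once all axioms are verified, $(L,\square,\lozenge)\in\mathbb{MRDL}$ provides the required preimage, and $\varphi$ establishes the claimed one-to-one correspondence.
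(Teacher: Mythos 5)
Your proposal is correct in substance, but it takes a genuinely different route from the paper at the key step, part (2). The paper also sets $\square x:=\pi_1\square_K(x,0)$, but it then defines $\lozenge x:=\neg\square\neg x$, verifies only (M1), (M3) and (M7), and closes by appealing to Definition 3.5 together with Theorem 3.6; since Theorem 3.6 presupposes ${--}x=x$ and a general residuated distributive lattice is not involutive, that appeal is the weakest point of the published argument. Your route --- defining $\lozenge y:=\pi_2\square_K(0,y)$, deducing $\square_K(x,y)=(\square x,\lozenge y)$ from ($\square_c 5$)/($\square_c 6$) (the paper reaches the same identity by comparing the $\cup\,c$ and $\cap\,c$ components and invoking distributivity), and then deriving (M2) and (M6) directly from ($\square_c 4$) and ($\square_c 13$) --- is self-contained and sidesteps that issue; the chain ``$\square(x\rightarrow\lozenge y)=\lozenge x\rightarrow\lozenge y$ plus $\lozenge\square y=\square y$ gives (M6)'' checks out. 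One small correction: the pairs you feed to ($\square_c 13$) are swapped. Applying it to $(0,x)$ yields $\square\lozenge x=\lozenge x$, i.e.\ (M2), while applying it to $(y,0)$ yields $\lozenge\square y=\square y$; you have attributed each to the other pair. This is a bookkeeping slip, not a gap.

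For part (1) you prove that isomorphic monadic expansions have isomorphic images, which is the literal reading of the (awkwardly worded) statement but amounts to functoriality of $\mathbf{K}$. The paper's proof instead establishes \emph{injectivity} of $\varphi$: assuming $\square_{K_1}=\square_{K_2}$, it evaluates at the pairs $(x,{-}x)\in\mathbf{K}(L)$ and uses Proposition 3.4(25) (each quantifier determines the other) to conclude $\square_1=\square_2$ and $\lozenge_1=\lozenge_2$. Injectivity is what the concluding ``one-to-one correspondence'' actually requires. It does follow from your construction, because your reconstruction $\square x=\pi_1\square_K(x,0)$, $\lozenge y=\pi_2\square_K(0,y)$ recovers the monadic expansion exactly from its image, so equal images force equal expansions --- but you should state this explicitly rather than leave it to the functoriality remark, which by itself does not deliver the bijection.
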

\begin{proof}
(1)~~If $(L,\square,\lozenge)$ is a monadic residuated distributive lattice, then by Theorem \ref{theorem4.1}(5),(7),(8) and (14)-(16) we have that  $(\mathbf{K}(L),\square_K)$ satisfies the conditions ($\square_c1$)-($\square_c7$), which shows that $(\mathbf{K}(L),\square_K)$ is a monadic c-differential residuated distributive lattice. 
Let $(L,\square_1,\lozenge_1)$ and $(L,\square_2,\lozenge_2)$ be two monadic residuated distributive lattices such that
\begin{center}$\square_{K_1}(x,y)=(\square_1x,\lozenge_1y)=(\square_2x,\lozenge_2y)=\square_{K_2}(x,y)$
\end{center}
for any $(x,y)\in \mathbf{K}(L)$. In particular, as $(x,{-}x)\in \mathbf{K}(L)$, we have
\begin{center}
$(\square_1x,\lozenge_1(-x))=(\square_2x,\lozenge_2(-x))$,
\end{center}
By Proposition~\ref{proposition3.4}(25),
existential quantifiers are uniquely determined by universal quantifiers. Hence, since 
$\square_1=\square_2$, we conclude that $\lozenge_1=\lozenge_2$.

%which implies that $(\square_1,\lozenge_1)=(\square_2, \lozenge_2)$ and hence $\varphi$ is injective.

  (2)~~If $(\mathbf{K}(L),\square_K)$ is a monadic c-difference residuated distributive lattice (by Theorem \ref{theorem4.1} this is in fact the case), then we define the function $\square:L\rightarrow L$ by
\begin{center} $\square x=\pi_1\square_K(x,0)$,
\end{center}
and prove that $(L,\square,\lozenge)$ is a monadic residuated distributive lattice, in which $\lozenge x:=\neg\square \neg x$. Here we first prove that $(L,\square)$ is a monadic involutive residuated distributive lattice. By Proposition \ref{proposition4.5}, we have
\begin{center}~~~$(\blacktriangle)$~~~ $\square_K(x,0)=\square_K((x,0)\cup c)=(\square x,0)$.
\end{center}

(M1):\ By Definition \ref{definition4.3} and $(\blacktriangle)$, we have
\begin{center} $(1,0)=\square_K(x,0)\leadsto(x,0)=(\square x,0)\leadsto (x,0)=(\square x\rightarrow x,0)$,
\end{center}
which implies
\begin{center} $\square x\rightarrow x=1$.
\end{center}

(M3):\ By $(\blacktriangle)$ and Proposition \ref{proposition4.5}(9), we have
\begin{eqnarray*}
               (\square(x\wedge y),0)&=&\square_K(x\cap y,0)\\&=&\square_K(x,0)\cap \square_K(y,0)\\&=&(\square x,0)\cap (\square y,0)\\&=&(\square x\wedge \square y,0),
               \end{eqnarray*} which implies
               \begin{center} $\square(x\wedge y)=\square x\wedge \square y$.
               \end{center}

(M7):\ By $(\blacktriangle)$, we have
\begin{center} $\square(x\leadsto y,0)=(\square x,0)\leadsto (y,0)=\square_K(x,0)\leadsto (y,0)$,
\end{center}
and hence
\begin{eqnarray*}
               (\square(\square x\rightarrow y),0)&=&\square_K(\square_K(x,0)\leadsto (y,0))\\&=&\square_K(x,0)\leadsto \square_K(y,0)\\&=&(\square x,0)\rightarrow (\square y,0)\\&=&(\square x\rightarrow\square y,0),
               \end{eqnarray*}
which proves
\begin{center} $\square(\square x\rightarrow y)=\square x\rightarrow\square y$.
 \end{center}

 Hence by Definition \ref{definition3.5} and Theorem \ref{theorem3.6} that $(L,\square,\lozenge)$ is a monadic residuated distributive lattice.

Moreover, we prove that $\square_K=\square_c$. By Proposition \ref{proposition4.5}, we have
\begin{center} $\square_K(x,0)=(\square x,0)=\square_c(x,0)$,
\end{center}
and
\begin{eqnarray*}
              \square_c(0,y)&=&\square_c(({-}y,0)\otimes c)\\&=& \square_c({-}y,0)\otimes c\\&=&(\square {-}y,0)\otimes c\\&=& (0,\lozenge y)\\&=& \square_K(0,y).
               \end{eqnarray*}
We also show that
\begin{center} $\square_c(x,y)\cup c=\square_c(x,0)=(\square x,0)=\square_K(x,y)\cup c$,

$\square_c(x,y)\cap c=\square_c(x,0)=(0,\lozenge y)=\square_K(x,y)\sqcap c$.
\end{center}
Therefore the result follows by distributivity.
\end{proof}

\subsection{Extending the categorical equivalence between the categories $\mathbb{RDL}$ and $\mathbb{DRDL'}$ }

In this subsection, we will extend the categorical equivalence $\mathbf{C} \dashv \mathbf{K}$. Specifically, here we define two extended functors, which we also call $\mathbf{C}$ and $\mathbf{K}$, between the category $\mathbb{MRDL}$, whose objects are monadic residuated distributive lattices, and the category $\mathbb{MDRDL}$, whose objects are weak monadic c-differential residuated distributive lattices. In both categories, the morphisms are the corresponding algebra homomorphisms.

It is easy to check that the map
\begin{center}
$\mathbf{K}_f:(\mathbf{K}(L_1),\square_{K_1},\lozenge_{K_1})\rightarrow (\mathbf{K}(L_2),\square_{K_2},\lozenge_{K_2})$
 \end{center} given by
 \begin{center} $\mathbf{K}_f(x,y)=(f(x),f(y))$
 \end{center} is indeed a morphism in $\mathbb{MRDL}$ from $(\mathbf{K}(L_1),\square_{K_1},\lozenge_{K_1})$ to $(\mathbf{K}(L_2),\square_{K_2},\lozenge_{K_2})$. These assignments establish a functor $\mathbf{K}$ from $\mathbb{MRDL}$ to $\mathbb{MDRDL}$.

Moreover, if $g:\mathcal{L}_1\rightarrow \mathcal{L}_2$ is a homomorphism of c-differential residuated distributive lattices, then it is not hard to see that
\begin{center}$\mathbf{C}_g:\mathcal{C(L}_1)\rightarrow \mathcal{C(L}_2)$
\end{center} defined by
\begin{center} $\mathbf{C}_g(x)=g(x)$
\end{center} is a homomorphism of residuated distributive lattices.

The above construction can be lifted to the corresponding monadic algebraic structures.

\begin{proposition}\label{proposition5.7.} Let $(\mathcal{L},\square_c)\in \mathbb{MDRDL}$. Then $(\mathcal{C(L)},\square_c,\lozenge_c)\in\mathbb{MRDL}$, where
\begin{center} $\lozenge_cx=\neg\square_c(\neg x)$ for any $x\in \mathcal{L}$.
\end{center}
Moreover, if
$g:\mathcal{L}_1\rightarrow \mathcal{L}_2$  is a morphism in $\mathbb{MDRDL}$, then
$\mathbf{C}_g:\mathcal{C(L}_1)\rightarrow \mathcal{C(L}_2)$ is a morphism in $\mathbb{MRDL}$.
\end{proposition}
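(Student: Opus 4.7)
The plan has three stages: verify that $\square_c$ and $\lozenge_c$ restrict to endomaps of $\mathcal{C(L)}$; verify axioms (M1)--(M3) and (M6)--(M7), which by Theorem \ref{theorem3.2} suffice to conclude $(\mathcal{C(L)},\square_c,\lozenge_c)\in \mathbb{MRL}$; and then verify functoriality. For well-definedness, the monotonicity of $\square_c$ from Proposition \ref{proposition4.5} together with ($\square_c 1$) gives $\square_c x\geq \square_c c=c$ whenever $x\geq c$; and since $\neg$ reverses order and fixes $c$, the hypothesis $x\geq c$ yields $\neg x\leq c$, hence $\square_c\neg x\leq c$, hence $\lozenge_c x=\neg\square_c\neg x\geq c$.

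Before attacking the axioms, I observe that the residual in $\mathcal{C(L)}$ is just the restriction of $\leadsto$ from $\mathcal{L}$: for $x,y\in \mathcal{C(L)}$, the Leibniz identity specialized at $y=c$ forces $x\otimes c\leq c$, so $c\otimes x\leq c\leq y$, which gives $c\leq x\leadsto y$ and hence $x\leadsto y\in\mathcal{C(L)}$; moreover $(z\otimes x)\cup c\leq y$ is equivalent to $z\otimes x\leq y$ whenever $y\geq c$. With this in hand, (M1) is ($\square_c 2$), (M3) is ($\square_c 3$), (M7) is ($\square_c 4$), and (M2) reduces to ($\square_c 13$) via the definition $\lozenge_c=\neg\square_c\neg$. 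The main obstacle is (M6). Here I use the contraposition identity $a\leadsto b=\neg b\leadsto \neg a$, which holds in any commutative cyclic involutive residuated lattice. Applying it, $x\leadsto \square_c y=\neg\square_c y\leadsto \neg x$; rewriting $\neg\square_c y=\square_c\neg\square_c y$ by ($\square_c 13$) and invoking ($\square_c 4$), the left-hand side of (M6) becomes $\neg\square_c y\leadsto \square_c\neg x$. The right-hand side $\neg\square_c\neg x\leadsto\square_c y$, transformed by the same contraposition, yields the same expression.

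For the functoriality claim, let $g:\mathcal{L}_1\to\mathcal{L}_2$ be a morphism in $\mathbb{MDRDL}$. Since $g$ preserves the center and is order-preserving, the restriction $\mathbf{C}_g$ maps $\mathcal{C}(\mathcal{L}_1)$ into $\mathcal{C}(\mathcal{L}_2)$; it preserves the inherited residuated lattice operations by the already established part of the adjunction $\mathbf{C}\dashv \mathbf{K}$. Preservation of $\square_c$ is immediate from the corresponding property of $g$, and preservation of $\lozenge_c$ follows at once because $g$ commutes with both $\neg$ and $\square_c$. Hence $\mathbf{C}_g$ is a morphism in $\mathbb{MRDL}$.
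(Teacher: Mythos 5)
Your proof is correct and follows essentially the same route as the paper: (M1), (M3), (M7) are read off from ($\square_c2$)--($\square_c4$), while (M2) and (M6) are obtained by the involution-based argument of Theorem \ref{theorem3.6} (your contraposition computation via ($\square_c13$) and ($\square_c4$) is exactly that derivation transported to $\leadsto$ and $\neg$), and functoriality is reduced to $\mathbf{C}_g$ being a residuated-lattice homomorphism. The only difference is that you explicitly verify the closure of $\mathcal{C}(\mathcal{L})$ under $\square_c$ and $\lozenge_c$ and the identification of the residual of $\otimes_c$ with the restriction of $\leadsto$, details the paper leaves implicit by appealing to the known structure of $\mathbf{C}$ from the non-monadic case.
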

\begin{proof} First, we will show that $(\mathcal{C(L)},\square_c,\lozenge_c)\in\mathbb{MRDL}$. The axioms (M1), (M3) and (M7) can be directly obtained from $\square_c2$-$\square_c4$, respectively.
The proofs of (M2) and (T6) are similar to those of (M2) and (T6) in the derivation from Theorem \ref{theorem3.6}, respectively. The moreover part can be obtained from the fact that $\mathbf{C}_g$ is a homomorphism of residuated distributive lattices.
\end{proof}

\begin{proposition}\label{proposition5.12.} Let $(L,\square,\lozenge)$ be a monadic residuated distributive lattice. Then the map
\begin{center} $\phi:(L,\square,\lozenge)\rightarrow (\mathcal{C(\mathbf{K}}(L)),\square_K,\lozenge_K)$
\end{center} given by
\begin{center} $\phi(x)=(x,0)$
\end{center} is an isomorphism in $\mathbb{MRDL}$.
\end{proposition}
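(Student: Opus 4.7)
The plan is to leverage the already-established isomorphism of residuated distributive lattices from Lemma 7.6 in \cite{Castiglionib} and reduce the problem to verifying that $\phi$ intertwines the corresponding pairs of monadic operators on the two sides. Since Proposition \ref{proposition5.7.} already guarantees that $(\mathcal{C}(\mathbf{K}(L)),\square_K,\lozenge_K)$ is a monadic residuated distributive lattice, the task is essentially compatibility checking.

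First, I would observe that $\mathcal{C}(\mathbf{K}(L))$ equals the set $\{(x,0)\mid x\in L\}$, because $(x,y)\geq c=(0,0)$ in the induced order of $\mathbf{K}(L)$ forces $y=0$. This makes $\phi$ manifestly bijective, and by Lemma 7.6 in \cite{Castiglionib} the map is already an isomorphism between the residuated distributive lattice reducts; hence the RDL structure requires no additional work.

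Second, I would verify the commutation with the universal quantifier by the direct computation
\[
\square_K(\phi(x)) = \square_K(x,0) = (\square x,\lozenge 0) = (\square x,0) = \phi(\square x),
\]
which relies only on the definition of $\square_K$ and the identity $\lozenge 0 = 0$ from Proposition \ref{proposition3.4}(10). The commutation with the existential quantifier is obtained by unfolding the definition $\lozenge_K u = \neg\square_K\neg u$ given in Proposition \ref{proposition5.7.} and using that the involution on $\mathbf{K}(L)$ acts by $\neg(a,b)=(b,a)$:
\[
\lozenge_K(\phi(x)) = \neg \square_K \neg(x,0) = \neg \square_K(0,x) = \neg(\square 0,\lozenge x) = \neg(0,\lozenge x) = (\lozenge x,0) = \phi(\lozenge x),
\]
where Definition \ref{definition3.1}(M4) is invoked for $\square 0 = 0$.

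I do not expect any serious obstacle: the heavy structural lifting was carried out in \cite{Castiglionib} at the RDL level, and the monadic compatibilities reduce to elementary identities about the behaviour of the two quantifiers at $0$, together with the explicit descriptions of $\square_K$, $\lozenge_K$ and the involution $\neg$ on $\mathbf{K}(L)$. The only point that requires a line of care is confirming that $\lozenge_K$, defined on all of $\mathbf{K}(L)$ via $\neg\square_K\neg$, restricts properly to $\mathcal{C}(\mathbf{K}(L))$; this is immediate since $\lozenge x\odot 0 = 0$ forces $(\lozenge x, 0)\in\mathbf{K}(L)$, so the image of $\phi(\lozenge x)$ lies in $\mathcal{C}(\mathbf{K}(L))$ as required.
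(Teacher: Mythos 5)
Your proposal is correct and follows essentially the same route as the paper's own proof: cite \cite{Castiglionib} for the bijective RDL-isomorphism and then check the two quantifier compatibilities via the identities $\lozenge 0=0$ and $\square 0=0$. Your explicit unfolding of $\lozenge_K$ as $\neg\square_K\neg$ and the remark on the description of $\mathcal{C}(\mathbf{K}(L))$ are just slightly more detailed versions of what the paper does.
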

\begin{proof} 
It is well known (see \cite{Castiglionib} that $\phi$ is bijective homomorphism of distributive residuated lattices. To prove our claim, it remains to show that it preserves both $\square$ and $\lozenge$. Indeed, let $x \in L$. From (M4) and (M5), it follows that
\begin{center} $\phi(\square x)=(\square x,0)=(\square x,\lozenge 0)=\square_K((x,0))=\square_K(\phi(x))$,

$\phi(\lozenge x)=(\lozenge x,0)=(\lozenge x, \square 0)=\lozenge_K((x,0))=\lozenge_K(\phi(x))$,

\end{center} which proves that $\phi$ preserves the universal and existential quantifiers.
\end{proof}

If $\mathcal{L}$ is a c-differential residuated distributive lattice, then
\begin{center} $\psi:\mathcal{L}\rightarrow  \mathbf{K}(\mathcal{C(L)})$
\end{center} given by
\begin{center} $\psi(x)=(x\cup c,\neg x\cup c)$
\end{center} is an injective homomorphism of c-differential residuated distributive lattices (see \cite{Castiglionib}).

To obtain the main result of this subsection, we need the following important results.

\begin{proposition}\label{proposition5.13} Let $(\mathcal{L},\square_c)$ be a monadic c-differential residuated distributive lattice. Then the injective map $\psi$ preserves $\square_c$.
\end{proposition}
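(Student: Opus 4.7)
My plan is to verify the identity $\psi(\square_c x)=\square_c(\psi(x))$ coordinate by coordinate, where the $\square_c$ on the target $\mathbf{K}(\mathcal{C}(\mathcal{L}))$ is the center universal quantifier of the form $(a,b)\mapsto(\square_c a,\lozenge_c b)$ induced from Proposition \ref{proposition5.7.}, with $\lozenge_c y:=\neg\square_c(\neg y)$ on $\mathcal{C}(\mathcal{L})$. Unfolding definitions,
$$\psi(\square_c x)=(\square_c x\cup c,\;\neg\square_c x\cup c)\quad\text{and}\quad \square_c(\psi(x))=(\square_c(x\cup c),\;\lozenge_c(\neg x\cup c)).$$
So the task splits into two scalar equalities in $\mathcal{L}$.

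For the first coordinate, the identity $\square_c x\cup c=\square_c(x\cup c)$ is precisely axiom $(\square_c 6)$ of Definition \ref{definition4.3}, so this side is immediate. For the second coordinate, I would expand $\lozenge_c(\neg x\cup c)=\neg\square_c\,\neg(\neg x\cup c)$, then exploit that $\neg$ is a dual lattice automorphism and $c$ is a fixed point of the involution, so that $\neg(\neg x\cup c)=\neg\neg x\cap \neg c=x\cap c$. Axiom $(\square_c 5)$ then yields $\square_c(x\cap c)=\square_c x\cap c$, and a final De Morgan step (again using $\neg c=c$) produces $\neg(\square_c x\cap c)=\neg\square_c x\cup c$, matching the expression required in the second coordinate.

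Overall, the argument is a direct unfolding: axioms $(\square_c 5)$ and $(\square_c 6)$ are essentially tailored to make $\psi$ intertwine the universal and existential quantifiers on either side, once one observes that the induced $\lozenge_c$ on $\mathcal{C}(\mathcal{L})$ is obtained by conjugating $\square_c$ with $\neg$. The only genuine obstacle is notational hygiene: one must carefully distinguish the operator $\square_c$ on $\mathcal{L}$, its restriction to $\mathcal{C}(\mathcal{L})$, and the derived center universal quantifier on $\mathbf{K}(\mathcal{C}(\mathcal{L}))$, together with tracking how the fixed-point condition $\neg c=c$ and the De Morgan laws let $\neg$ commute with the formation of $\cup c$ and $\cap c$. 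No inequalities or auxiliary constructions beyond these are needed.
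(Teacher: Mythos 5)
Your proposal is correct and follows essentially the same route as the paper: both verify the identity coordinatewise, handling the first coordinate by axiom $(\square_c 6)$ and the second by unfolding $\lozenge_c=\neg\square_c\neg$, using that $\neg$ is a De Morgan involution fixing $c$, and applying axiom $(\square_c 5)$. The paper merely phrases the second-coordinate step as $\neg\square_c x\cup c=\lozenge_c(\neg x)\cup c=\lozenge_c(\neg x\cup c)$, which is justified by exactly the computation you spell out.
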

\begin{proof}
First, note that $\lozenge_c c = c$. Let $x \in \mathcal{L}$. From Definition \ref{definition4.3}, the definition of $\lozenge_c$ and Proposition \ref{proposition3.4}(15), we have:
\begin{eqnarray*}
\psi(\square_c x) &=& (\square_c x \cup c, \neg(\square_c x) \cup c) \\
&=& (\square_c (x \cup c), \lozenge_c\neg x \cup c) \\
&=& (\square_c(x \cup c), \lozenge_c(\neg x \cup c)) \\
&=& \square_K(x \cup c, \neg x \cup c) \\
&=& \square_K(\psi(x)).
\end{eqnarray*}
Thus,
\begin{center}
$\psi(\square_c x) = \square_K(\psi(x))$.
\end{center}
This shows that the map $\psi$ preserves the center universal quantifier.
\end{proof}

\begin{proposition}\label{proposition5.14}\cite{Castiglionib}  A c-differential residuated distributive lattice $\mathcal{L}$ satisfies the condition $\mathbf{CK}$ if and only if $\psi$ is a surjective.
\end{proposition}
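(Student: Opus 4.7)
The plan is to prove the biconditional by showing that the surjectivity of $\psi$ is a literal reformulation of the condition $\mathbf{CK}$ once one unpacks what an arbitrary element of $\mathbf{K}(\mathcal{C}(\mathcal{L}))$ looks like.

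First, I would explicitly describe the elements of the codomain $\mathbf{K}(\mathcal{C}(\mathcal{L}))$. By the construction of the Kalman functor applied to the residuated distributive lattice $\mathcal{C}(\mathcal{L})$, whose bottom is $c$ and whose product is $x\otimes_c y=(x\otimes y)\cup c$, a pair $(x,y)\in \mathcal{C}(\mathcal{L})\times \mathcal{C}(\mathcal{L})^o$ belongs to $\mathbf{K}(\mathcal{C}(\mathcal{L}))$ if and only if $x\otimes_c y=c$, i.e., $(x\otimes y)\cup c=c$. Equivalently,
\[
\mathbf{K}(\mathcal{C}(\mathcal{L}))=\{(x,y)\mid x,y\geq c \text{ and } x\otimes y\leq c\}.
\]
This is precisely the collection of pairs occurring in the hypothesis of $\mathbf{CK}$.

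For the forward implication, assume $\mathcal{L}$ satisfies $\mathbf{CK}$ and pick any $(x,y)\in \mathbf{K}(\mathcal{C}(\mathcal{L}))$. By the characterization above, $x,y\geq c$ and $x\otimes y\leq c$; applying $\mathbf{CK}$ gives some $z\in \mathcal{L}$ such that $z\cup c=x$ and ${\sim}z\cup c=y$. Then by the very definition of $\psi$,
\[
\psi(z)=(z\cup c,{\sim} z\cup c)=(x,y),
\]
which proves surjectivity. Conversely, if $\psi$ is surjective and $x,y\in \mathcal{L}$ satisfy $x,y\geq c$ and $x\otimes y\leq c$, then $(x,y)\in \mathbf{K}(\mathcal{C}(\mathcal{L}))$, so there exists $z\in \mathcal{L}$ with $\psi(z)=(x,y)$, that is, $z\cup c=x$ and ${\sim}z\cup c=y$, yielding $\mathbf{CK}$.

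There is no deep obstacle: the entire content lies in correctly identifying the underlying set of $\mathbf{K}(\mathcal{C}(\mathcal{L}))$ as the pairs bounded below by $c$ whose product sits below $c$, which comes directly from the definitions of $\mathcal{C}$ and $\mathbf{K}$ recalled in Section~2. Once this identification is made, the equivalence between membership in the image of $\psi$ and the existential statement in $\mathbf{CK}$ is immediate, since $\psi$ is by construction the map that records the pair $(z\cup c,{\sim}z\cup c)$ of central covers whose existence $\mathbf{CK}$ postulates.
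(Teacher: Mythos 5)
Your proof is correct and is exactly the definitional unwinding that the paper delegates to the cited reference: identifying $\mathbf{K}(\mathcal{C}(\mathcal{L}))$ as the set of pairs $(x,y)$ with $x,y\geq c$ and $x\otimes y\leq c$ (since the bottom of $\mathcal{C}(\mathcal{L})$ is $c$ and $x\otimes_c y=(x\otimes y)\cup c$), after which surjectivity of $\psi(z)=(z\cup c,{\sim}z\cup c)$ is a literal restatement of $\mathbf{CK}$. No gaps; the well-definedness of $\psi$ that your argument implicitly uses is already supplied by the preceding discussion in the paper.
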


Here we denote by $\mathbb{MDRDL'}$ the full subcategory of $\mathbb{MDRDL}$ whose objects satisfy the condition $\mathbf{CK}$. Now, as a consequence of the previous results, we are ready to present the main result of the paper.
\begin{theorem}\label{theorem5.15} The functors $\mathbf{K}$ and $\mathbf{C}$ establish a categorical equivalence between $\mathbb{MRDL}$ and $\mathbb{MDRDL'}$ with the natural isomorphisms $\phi$ and $\psi$.
\end{theorem}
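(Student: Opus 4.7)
The plan is to assemble this equivalence from the preparatory results. The base categorical equivalence $\mathbf{C}\dashv \mathbf{K}:\mathbb{RDL}\to \mathbb{DRDL'}$, with natural isomorphisms $\phi$ and $\psi$, is already established in \cite{Castiglionib}. Hence it suffices to verify that both the functors and these natural isomorphisms lift appropriately to the monadic setting.

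First, I would check that the functors $\mathbf{K}$ and $\mathbf{C}$ extend (respectively restrict) to the monadic categories. For $\mathbf{K}:\mathbb{MRDL}\to\mathbb{MDRDL'}$, given $(L,\square,\lozenge)\in\mathbb{MRDL}$, Theorem \ref{theorem4.6} shows that $(\mathbf{K}(L),\square_K)$ is a monadic c-differential residuated distributive lattice; since $\mathbf{K}(L)\in\mathbb{DRDL'}$ already at the base level, we land in $\mathbb{MDRDL'}$. A morphism $f$ in $\mathbb{MRDL}$ extends componentwise to $\mathbf{K}_f(x,y)=(f(x),f(y))$, which commutes with $\square_K$ because $f$ commutes with $\square$ and $\lozenge$ separately. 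For $\mathbf{C}:\mathbb{MDRDL'}\to\mathbb{MRDL}$, Proposition \ref{proposition5.7.} supplies the object and morphism assignments together with the derived existential quantifier $\lozenge_c x=\neg \square_c (\neg x)$.

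Second, I would produce the unit and counit. The unit component $\phi_L:L\to \mathcal{C}(\mathbf{K}(L))$ given by $\phi_L(x)=(x,0)$ is an isomorphism in $\mathbb{MRDL}$ by Proposition \ref{proposition5.12.}. The counit component $\psi_{\mathcal{L}}:\mathcal{L}\to \mathbf{K}(\mathcal{C}(\mathcal{L}))$ given by $\psi_{\mathcal{L}}(x)=(x\cup c,\neg x\cup c)$ is an injective homomorphism of c-differential residuated distributive lattices by \cite{Castiglionib}; it preserves $\square_c$ by Proposition \ref{proposition5.13}; and, because we restrict to objects satisfying $\mathbf{CK}$, it is surjective by Proposition \ref{proposition5.14}. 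Thus $\psi_{\mathcal{L}}$ is an isomorphism in $\mathbb{MDRDL'}$.

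Finally, I would verify the naturality squares for $\phi$ and $\psi$. Since morphisms in $\mathbb{MRDL}$ and $\mathbb{MDRDL'}$ are in particular morphisms in $\mathbb{RDL}$ and $\mathbb{DRDL'}$, naturality follows immediately from the non-monadic case proved in \cite{Castiglionib}. I do not anticipate a genuine obstacle here; the substantive content has already been absorbed into the preparatory propositions. The most delicate point to double-check is that $\psi_{\mathcal{L}}^{-1}$ also preserves the center universal quantifier, but this is automatic once $\psi_{\mathcal{L}}$ is known to be a bijective homomorphism in $\mathbb{MDRDL'}$.
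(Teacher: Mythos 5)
Your proposal is correct and follows the same route as the paper: the paper's proof of Theorem \ref{theorem5.15} simply cites Propositions \ref{proposition5.12.}, \ref{proposition5.13} and \ref{proposition5.14}, which are exactly the ingredients you assemble (with the functoriality and naturality checks you add being implicit in the paper's surrounding discussion). No discrepancy to report.
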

\begin{proof} It follows from Propositions \ref{proposition5.12.}, \ref{proposition5.13} and \ref{proposition5.14}.
\end{proof}

\begin{corollary}\label{corollary5.16}
    The functor $\mathbb{MRDL}\vdash \mathbb{RDL}$ followed by $\mathbf{K}:\mathbb{RDL}\vdash \mathbb{DRDL'}$ factors through the forgetful $\mathbf{F}:\mathbb {MDRDL'}\vdash  \mathbb{DRDL'}$.
\end{corollary}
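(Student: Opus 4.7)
The plan is to take the extended Kalman functor $\mathbf{K}:\mathbb{MRDL}\to \mathbb{MDRDL'}$ produced in Theorem \ref{theorem5.15} as the mediating arrow itself, and to observe that the factorization is essentially built into the way $\mathbf{K}$ was lifted in the previous subsection. Writing $U:\mathbb{MRDL}\to \mathbb{RDL}$ for the obvious forgetful functor and, temporarily, $\mathbf{K}_0:\mathbb{RDL}\to \mathbb{DRDL'}$ for the original Kalman functor recalled in Section 2, what needs to be exhibited is a functor $G:\mathbb{MRDL}\to \mathbb{MDRDL'}$ satisfying $\mathbf{F}\circ G = \mathbf{K}_0\circ U$. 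I take $G:=\mathbf{K}$, the extension of Theorem \ref{theorem5.15}.

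On objects, Theorem \ref{theorem4.6} together with the definitions collected at the start of Subsection 4.2 give $\mathbf{K}(L,\square,\lozenge)=(\mathbf{K}_0(L),\square_K)$, so that applying $\mathbf{F}$ simply discards $\square_K$ and returns the underlying c-differential residuated distributive lattice $\mathbf{K}_0(L)=\mathbf{K}_0(U(L,\square,\lozenge))$. On morphisms, $\mathbf{K}(f)$ is the coordinate-wise map $(x,y)\mapsto(f(x),f(y))$, which is literally the same set-theoretic function as $\mathbf{K}_0(U(f))$. Thus $\mathbf{F}\circ \mathbf{K}=\mathbf{K}_0\circ U$ on the nose, which is precisely the claimed factorization.

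The only point demanding any attention is that $G=\mathbf{K}$ does land in the subcategory $\mathbb{MDRDL'}$ rather than merely in $\mathbb{MDRDL}$; this however is exactly the content of the restricted equivalence in Theorem \ref{theorem5.15}, itself resting on the classical Castiglioni--Menni--Sagastume fact that every object in the image of $\mathbf{K}_0$ satisfies the condition $\mathbf{CK}$. I do not anticipate any genuine technical obstacle: the corollary records a tautology at the level of underlying carriers and homomorphisms, once the extension of $\mathbf{K}$ constructed in Subsection 4.2 is in hand. The only thing to watch is notational hygiene in keeping the two meanings of $\mathbf{K}$ apart.
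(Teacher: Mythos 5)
Your argument is correct and matches the paper's intent: the paper dismisses this corollary with ``the proof is straightforward,'' and the straightforward proof is exactly what you spell out, namely that the lifted $\mathbf{K}$ of Theorem \ref{theorem5.15} acts as $(\mathbf{K}_0(L),\square_K)$ on objects and coordinate-wise on morphisms, so composing with the forgetful functor $\mathbf{F}$ recovers $\mathbf{K}_0\circ U$ on the nose. Your added remark about checking that the image lands in $\mathbb{MDRDL'}$ (via the $\mathbf{CK}$ condition) is a sensible point of care that the paper leaves implicit.
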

\begin{proof} The proof is straightforward.
\end{proof}

As a consequence of Theorem \ref{theorem5.15} and Corollary \ref{corollary5.16}, we obtain a commutative diagram:
\begin{center}
\begin{tikzpicture}
%4个节点
  \node[inner sep=2pt] (MV^bullet) at (0,0) {$\mathbb{RDL}$};
  \node[inner sep=2pt] (MV) at (0,3) {$\mathbb{MRDL}$ };
  \node[inner sep=2pt] (MDRL) at (3,0) {$\mathbb{DRDL'}$};
  \node[inner sep=2pt] (iIRL_0) at (3,3) {$\mathbb{MDRDL'}$};  
%水平纵向箭头
\draw[->] ([yshift=3pt]MV.east) -- ([yshift=3pt]iIRL_0.west) node[midway, above=3pt] {$\mathbf{K}$};%向右的箭头
\draw[<-] ([yshift=-3pt]MV.east) -- ([yshift=-3pt]iIRL_0.west) node[midway, below=3pt] {$\mathbf{C}$};%向左的箭头
\draw[->] ([yshift=3pt]MDRL.west) -- ([yshift=3pt]MV^bullet.east) node[midway, above=3pt] {$\mathbf{K}$};%向右的箭头
\draw[<-] ([yshift=-3pt]MDRL.west) -- ([yshift=-3pt]MV^bullet.east) node[midway, below=3pt] {$\mathbf{C}$};%向左的箭头

%2个纵向箭头  
\draw[->] (MV) -- (MV^bullet) node[midway, left=2pt] {$\mathbf{F}$};
  \draw[->] (iIRL_0) -- (MDRL) node[midway, right=2pt] {$\mathbf{F}$};
\end{tikzpicture} 
\begin{center}  {\bf{Figure 4.}} Diagram of the functors $\mathbf{K}$ and $\mathbf{C}$ relate the categories $\mathbb{MRDL}$ and $\mathbb{MDRDL'}$ .
\end{center}
\end{center}
with the properties discussed in the fifth paragraph of Introduction.\medskip

 Castiglioni, Lewin and Sagastume proved in \cite{Castiglionic} that the category $\mathbb{MV}^\bullet$, whose objects are MV$^\bullet$-algebras, is the image of $\mathbb{MV}$, whose objects are MV-algebras, by the equivalence $\mathbf{K}^\bullet$. Following the line, Sagatume and San Mart\'{i}n lifted the $\mathbf{K}^\bullet$ between the categories $\mathbb{MV}_\textrm{U}$, whose objects are UMV-algebras, and $\mathbb{MV}^\bullet_\textrm{cU}$, whose objects are pairs formed by an object of $\mathbf{K}^\bullet$ and a cU-operator.
 
  As a directly consequence of Corollary \ref{corollary3.8}(2), we have the following result.

\begin{corollary}\label{corollary5.17} The subcategory of $\mathbb{MRDL}$ of monadic involutive residuated lattices determined by the equations (INV), (DIV) and (PRE) is term-equivalent to the category $\mathbb{MV}_\textrm{U}$ of UMV-algebras.
\end{corollary}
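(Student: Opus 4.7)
The plan is to leverage the variety-level result already established in Corollary \ref{corollary3.8}(2) and lift it to the categorical setting, checking that distributivity is automatic and that morphisms correspond.

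First, by Corollary \ref{corollary3.8}(2), the subvariety of $\mathbb{MRL}$ determined by the equations (INV), (DIV) and (PRE) is term-equivalent to the variety $\mathbb{UMV}$ of UMV-algebras. The term translations are: in one direction, $(L,\square,\lozenge)$ is sent to $(L,\square)$ (forgetting $\lozenge$, which is recoverable as $\lozenge x = -\square - x$ by Theorem \ref{theorem3.6}), and the underlying MV-algebra reduct of $L$ supplies the Wajsberg/MV operations needed for $\mathbb{UMV}$; in the other direction, a UMV-algebra $(L,\forall)$ is expanded to $(L,\forall,\lozenge)$ with $\lozenge x := -\forall - x$, and one verifies via Theorem \ref{theorem3.6} and Remark \ref{remark3.7} that the resulting structure satisfies (M1)--(M7).

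Next, I would observe that every algebra in the subvariety of $\mathbb{MRL}$ satisfying (INV), (DIV) and (PRE) is automatically distributive, since the underlying algebra is a (monadic) MV-algebra and the lattice reduct of any MV-algebra is a distributive lattice (this is a classical fact following from (DIV) and (PRE); see, e.g., the standard references on MV-algebras). Therefore such an algebra is in fact an object of $\mathbb{MRDL}$, and the subcategory of $\mathbb{MRDL}$ singled out by (INV), (DIV), (PRE) coincides with the full subcategory of $\mathbb{MRL}$ cut out by the same equations. This eliminates the apparent gap between the ``distributive'' hypothesis of the corollary and the ``not a priori distributive'' setting of Corollary \ref{corollary3.8}(2).

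Finally, I would check the correspondence on morphisms. Since the term translations between $\mathbb{MRL} + \{\text{(INV),(DIV),(PRE)}\}$ and $\mathbb{UMV}$ are definable by terms in the respective signatures, any homomorphism in one category preserves, and is preserved by, the derived operations of the other; in particular a map preserves $\square$ iff it preserves the UMV operator $\forall$, and preservation of $\lozenge$ is automatic from preservation of $\square$ and $\neg$ via $\lozenge x = -\square - x$. Thus the object-level term-equivalence extends to a categorical term-equivalence, which is the desired statement. The only mildly non-routine step is the distributivity observation in the second paragraph; everything else is bookkeeping built on Theorem \ref{theorem3.6}, Remark \ref{remark3.7} and Corollary \ref{corollary3.8}(2).
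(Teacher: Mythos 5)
Your proposal is correct and follows essentially the same route as the paper, which simply derives the corollary from Corollary \ref{corollary3.8}(2); your additional observations (that (DIV) and (PRE) force distributivity of the lattice reduct, so the restriction from $\mathbb{MRL}$ to $\mathbb{MRDL}$ is vacuous, and that term-definable translations automatically respect homomorphisms) are exactly the routine details the paper leaves implicit. No gaps.
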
 

As a consequence of Figure 1, Figure 4 and Corollary \ref{corollary5.17},  In the following diagram we have the relationship among the above mentioned categories, where \emph{inc} denotes the inclusion functor:

\begin{center}
\begin{tikzpicture}
  \node[inner sep=2pt] (MV^bullet) at (0,0) {$\mathbb{MV}^\bullet_\textrm{cU}$};
  \node[inner sep=2pt] (MV) at (0,3) {$\mathbb{MV}_\textrm{U}$};
  \node[inner sep=2pt] (MDRL) at (3,0) {$\mathbb{MDRDL'}$};
  \node[inner sep=2pt] (iIRL_0) at (3,3) {$\mathbb{MRDL}$};
  \node[inner sep=2pt] (DRLyipie) at (6,0) {$\mathbb{DRDL'}$};
  \node[inner sep=2pt] (IRL_0) at (6,3) {$\mathbb{RDL}$};
  
  \draw[->] ([xshift=3pt]MV.south) -- ([xshift=3pt]MV^bullet.north) node[midway, right=3pt] {$\mathscr{K}$};
  \draw[<-] ([xshift=-3pt]MV.south) -- ([xshift=-3pt]MV^bullet.north) node[midway, left=3pt] {$\mathbf{K}^\bullet$};
  \draw[->] ([xshift=3pt]MDRL.north) -- ([xshift=3pt]iIRL_0.south) node[midway, right=3pt] {$\mathbf{C}$};
  \draw[<-] ([xshift=-3pt]MDRL.north) -- ([xshift=-3pt]iIRL_0.south) node[midway, left=3pt] {$\mathbf{K}$};
  \draw[->] ([xshift=3pt]DRLyipie.north) -- ([xshift=3pt]IRL_0.south) node[midway, right=3pt] {$\mathbf{C}$};
  \draw[<-] ([xshift=-3pt]DRLyipie.north) -- ([xshift=-3pt]IRL_0.south) node[midway, left=3pt] {$\mathbf{K}$};

  \draw[->] (MV) -- (iIRL_0) node[midway, above=2pt] {inc};
  \draw[->] (iIRL_0) -- (IRL_0) node[midway, above=2pt] {$\mathbf{F}$};
  \draw[->] (MV^bullet) -- (MDRL) node[midway, above=2pt] {inc};
  \draw[->] (MDRL) -- (DRLyipie) node[midway, above=2pt] {$\mathbf{F}$};
\end{tikzpicture}
\begin{center}  {\bf{Figure 5.}} Diagram of the relationship between the categories $\mathbb{MV}_\textrm{U}$, $\mathbb{MV}^\bullet_\textrm{cU}$, $\mathbb{MDRDL'}$ and $\mathbb{MRDL}$.
\end{center}
\end{center}

\section{A contextual translation}

Let $\mathcal V$ and $\mathcal W$ be varieties, and denote by the same symbols the
corresponding categories of algebras and homomorphisms. In~\cite{M2018}, Moraschini
showed that there is a correspondence between nontrivial adjunctions between
$\mathcal V$ and $\mathcal W$---that is, pairs of functors
$F : \mathcal V \to \mathcal W$ and $G : \mathcal W \to \mathcal V$ such that $F$
is left adjoint to $G$ (written $F \dashv G$)---and nontrivial
$\kappa$-contextual translations from the equational consequence relation
associated with $\mathcal V$ to that associated with $\mathcal W$, denoted by
$\models_{\mathcal V}$ and $\models_{\mathcal W}$, respectively. Here, $\kappa$
is an arbitrary cardinal, not necessarily finite. Well-known instances of
$\kappa$-contextual translations include the Kolmogorov translation arising from
the classical adjunction induced by the Kalman construction for distributive
lattices~\cite{Cignoli}, the 2-contextual translation determined by the Kalman construction connecting tense  integral commutative
residuated distributive lattices and tense c-differential residuated lattices \cite{CPZ2024}, the Glivenko translation determined by Glivenko’s functor
connecting Boolean algebras and Heyting algebras~\cite{Kol}, and the Gödel
translation relating the equational consequence for Heyting algebras and interior
algebras~\cite{G1933}.

The purpose of this section is to show that Theorem \ref{theorem5.15} can be applied to obtain a
finite, nontrivial $2$-contextual translation between the equational consequence
relations of monadic c-differential residuated distributive lattices satisfying {\bf{CK}} and monadic residuated distributive lattices.

We begin by fixing some notation. Let $X$ be a set of variables. For a
propositional language $L$, we denote by $\mathrm{Fm}_L(X)$ the absolutely free
algebra of $L$-terms over $X$, and by $\mathrm{Fm}_L(X)$ its underlying set. In
particular, for a cardinal $\lambda$, $\mathrm{Fm}_L(\lambda)$ denotes the set of
$L$-terms in the variables $\{x_i : i < \lambda\}$. We define
\[
\mathrm{Eq}(L,X) := \mathrm{Fm}_L(X) \times \mathrm{Fm}_L(X).
\]
If $\mathcal V$ is a variety, $\mathbf F_{\mathcal V}(X)$ stands for the free
$\mathcal V$-algebra over $X$. Moreover, if $L$ is the language of $\mathcal V$
and $t(x_1,\dots,x_n)$ is an $L$-term, we use the same notation to denote its image
under the canonical homomorphism from $\mathrm{Fm}_L(X)$ onto
$\mathbf F_{\mathcal V}(X)$. When $X = \{x_1,\dots,x_k\}$, we write
$\mathbf F_{\mathcal V}(k)$ instead of $\mathbf F_{\mathcal V}(X)$. We also write
$\mathcal V$ for the category determined by the variety. Given $A \in \mathcal V$
and $S \subseteq A \times A$, $\mathrm{Cg}_A(S)$ denotes the congruence generated by
$S$. For tuples $\vec a, \vec b \in A^N$, we write $\mathrm{Cg}_A(\vec a,\vec b)$
for the congruence generated by the pairs
$(a_1,b_1),\dots,(a_N,b_N)$.

\subsection*{ $2$-contextual translations}

Let $\mathcal K$ be a class of similar algebras and let $L_{\mathcal K}$ be its
language. The language $L_{\mathcal K}^2$ consists of all pairs of
$L_{\mathcal K}$-terms built from the variables
\[
X_n = \{x_j^1 : 1 \le j \le n\} \cup \{x_j^2 : 1 \le j \le n\},
\]
for each $n \in \mathbb N$. That is, an $n$-ary operation symbol in
$L_{\mathcal K}^2$ has the form
\[
f\big((x_1^1,x_1^2),\dots,(x_n^1,x_n^2)\big)
=
\big(
t_1(x_1^1,x_1^2,\dots,x_n^1,x_n^2),
t_2(x_1^1,x_1^2,\dots,x_n^1,x_n^2)
\big),
\]
where $t_1,t_2 \in \mathrm{Fm}_{L_{\mathcal K}}(X_n)$.
For example, if $L_{\mathcal K}=\{\to,\vee\}$, the operation $\oplus$ defined by
\[
\oplus\big((x_1^1,x_1^2),(x_2^1,x_2^2)\big)
:=
\big((x_2^2 \to (x_1^1 \to x_2^1))\vee x_1^2,\, x_1^2 \vee x_2^1\vee x_2^2\big)
\]
is a binary operation of $L_{\mathcal K}^2$.

For any $A \in \mathcal K$, let $A[2]$ be the algebra of type $L_{\mathcal K}^2$
with universe $A^2$, where each operation is interpreted componentwise according
to the defining terms. The class
\[
\mathcal K[2] := \mathbb I\{A[2] : A \in \mathcal K\}
\]
is called the $2$-matrix power of $\mathcal K$. It is known that $\mathcal K[2]$
is a variety if and only if $\mathcal K$ is one (Theorem~2.3(iii) of~\cite{Mc1996}).
Moreover, any homomorphism $h : A \to B$ induces a homomorphism
$h \times h : A^2 \to B^2$, which we denote by $h[2]$. This yields a functor
$[2] : \mathcal K \to \mathcal K[2]$.

Let $L_{\mathcal V}$ and $L_{\mathcal W}$ be the languages of the varieties
$\mathcal V$ and $\mathcal W$, respectively. A $2$-translation from
$L_{\mathcal V}$ to $L_{\mathcal W}$ is a mapping
$\tau : L_{\mathcal V} \to L_{\mathcal W}^2$ preserving arities. Such a
translation extends uniquely to terms, giving rise to a map
\[
\tau^\ast :
\mathrm{Fm}_{L_{\mathcal V}}(\lambda)
\longrightarrow
\mathrm{Fm}_{L_{\mathcal W}}(X_\lambda)^2,
\]
defined recursively on variables, constants, and compound terms. This extension
further induces a transformation on sets of equations.

A $2$-contextual translation of $\models_{\mathcal V}$ into
$\models_{\mathcal W}$ is a pair $(\tau,\Theta)$, where $\tau$ is a
$2$-translation and $\Theta \subseteq \mathrm{Eq}(L_{\mathcal W},2)$ is a set of
equations satisfying suitable preservation conditions with respect to operations
and entailment. The set $\Theta$ is called the context of the translation. The
translation is said to be nontrivial if there exists a nonempty pair of constant
terms witnessing that the context does not collapse all variables to equality (see \cite{M2018} for details).

We now provide an explicit description of the algebras obtained by applying the
functor $\mathbf{C}$ to finitely generated free $\mathbb{MDRDL'}$-algebras.
Specifically, we show that for each $n\geq 1$, the algebra
$\mathbf{C}(\mathbf{F}_{\mathbb{MDRDL'}}(n))$ is isomorphic to a quotient of the
free $\mathbb{MRDL}$-algebra on $2n$ generators, where the defining congruence
forces, for every generator, the product of the corresponding pair of variables
to be equal to $0$. This representation will be instrumental in the construction
of the associated contextual $2$-translation.

\begin{theorem}\label{theorem C}
The algebra $\mathbf{C}(\mathbf{F}_{\mathbb{MDRDL'}}(1))$ is isomorphic to
\[
\mathbf{F}_{\mathbb{MRDL}}(x_{1}^{1},x_{1}^{2})\big/
\mathrm{Cg}_{\mathbf{F}_{\mathbb{MRDL}}(2)}(x_{1}^{1}\odot x_{1}^{2}, 0).
\]
Moreover, if $X_n=\{x_{1}^{1},x_{1}^{2},\ldots,x_{n}^{1},x_{n}^{2}\}$, then
$\mathbf{C}(\mathbf{F}_{\mathbb{MDRDL'}}(n))$ is isomorphic to
$\mathbf{F}_{\mathbb{MRDL}}(X_n)/\theta_n$, where
\[
\theta_n \;=\;
\bigvee_{i=1}^{n}
\mathrm{Cg}_{\mathbf{F}_{\mathbb{MRDL}}(X_n)}(x_i^{1}\odot x_i^{2}, 0),
\]
for every $n\geq 1$.
\end{theorem}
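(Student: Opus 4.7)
The plan is to derive both assertions as instances of a single Yoneda-style argument, leveraging the equivalence $\mathbf{C}\dashv\mathbf{K}\colon\mathbb{MRDL}\rightarrow\mathbb{MDRDL'}$ established in Theorem~\ref{theorem5.15}. Since the $n=1$ statement is obtained from the general one by taking $X_1=\{x_1^{1},x_1^{2}\}$, it suffices to prove the general claim, and I describe the plan for arbitrary $n\ge 1$.

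Fix $M\in\mathbb{MRDL}$. The adjunction provides a natural bijection
\[
\mathbb{MRDL}\bigl(\mathbf{C}(\mathbf{F}_{\mathbb{MDRDL'}}(n)), M\bigr) \;\cong\; \mathbb{MDRDL'}\bigl(\mathbf{F}_{\mathbb{MDRDL'}}(n), \mathbf{K}(M)\bigr),
\]
and the universal property of the free algebra identifies the right-hand side with the $n$-th power of the underlying set of $\mathbf{K}(M)$. Recalling the explicit description
\[
\mathbf{K}(M) \;=\; \{(a,b)\in M\times M^{o}: a\odot b = 0\}
\]
from Section~2, this power is exactly the set of $2n$-tuples $(a_1,b_1,\ldots,a_n,b_n)\in M^{2n}$ such that $a_i\odot b_i = 0$ for every $i$. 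By the universal property of a quotient of a free algebra, this set is in natural bijection with $\mathbb{MRDL}\bigl(\mathbf{F}_{\mathbb{MRDL}}(X_n)/\theta_n, M\bigr)$: an assignment $x_i^{1}\mapsto a_i$, $x_i^{2}\mapsto b_i$ extends uniquely to a homomorphism $\mathbf{F}_{\mathbb{MRDL}}(X_n)\to M$, and factors through $\theta_n$ precisely when each defining relation $x_i^{1}\odot x_i^{2}\approx 0$ is satisfied in $M$, that is, when $a_i\odot b_i = 0$.

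Composing these natural bijections produces a natural isomorphism of the representable functors
\[
\mathbb{MRDL}\bigl(\mathbf{C}(\mathbf{F}_{\mathbb{MDRDL'}}(n)),-\bigr) \;\cong\; \mathbb{MRDL}\bigl(\mathbf{F}_{\mathbb{MRDL}}(X_n)/\theta_n,-\bigr),
\]
and the Yoneda lemma then yields the desired isomorphism of algebras.

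The main technical point to verify is that the bijection provided by the adjunction really behaves componentwise on $\mathbf{K}(M)$: the image in $\mathbf{K}(M)$ of a free generator $g_i$ of $\mathbf{F}_{\mathbb{MDRDL'}}(n)$ must correspond unambiguously to the pair of coordinates of that image in $M$, so that the two generators $(x_i^{1},x_i^{2})$ of the quotient are identified with the coordinate projections of the canonical image of $g_i$ in $\mathbf{K}(\mathbf{C}(\mathbf{F}_{\mathbb{MDRDL'}}(n)))$. I would verify this by unravelling the unit of the adjunction through the natural isomorphisms $\phi$ and $\psi$ of Propositions~\ref{proposition5.12.} and~\ref{proposition5.13}, checking the correspondence directly at the level of generators. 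Once that compatibility is in place, the case $n=1$ falls out by specialisation and the general case is a routine bookkeeping across the $n$ independent relations.
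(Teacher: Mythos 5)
Your argument is correct, but it follows a genuinely different route from the paper. The paper proves the case $n=1$ by adapting Theorem~7 of~\cite{CPZ2024}, and then handles general $n$ by induction, using the fact that $\mathbf{C}$ and $\mathbf{F}_{\mathbb{MDRDL'}}$, being left adjoints, preserve coproducts, together with the standard description of coproducts of quotients of free algebras. You instead give a uniform representability argument: the chain of natural bijections
\[
\mathbb{MRDL}\bigl(\mathbf{C}(\mathbf{F}_{\mathbb{MDRDL'}}(n)),M\bigr)\cong\mathbb{MDRDL'}\bigl(\mathbf{F}_{\mathbb{MDRDL'}}(n),\mathbf{K}(M)\bigr)\cong \mathbf{K}(M)^{n}\cong \mathbb{MRDL}\bigl(\mathbf{F}_{\mathbb{MRDL}}(X_n)/\theta_n,M\bigr)
\]
followed by Yoneda. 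This is sound: the first bijection comes from the (adjoint) equivalence of Theorem~\ref{theorem5.15}, the second from freeness, the third from the presentation of a quotient of a free algebra (noting that $\theta_n$ is the congruence generated by all pairs $(x_i^{1}\odot x_i^{2},0)$ jointly), and naturality in $M$ holds because $\mathbf{K}$ acts componentwise on morphisms. Your approach buys self-containedness and uniformity in $n$ --- it avoids both the induction and the appeal to the external base cases, and it makes visible \emph{why} the relations $x_i^{1}\odot x_i^{2}\approx 0$ appear (they are precisely the membership conditions for $\mathbf{K}(M)$). What the paper's route buys is the explicit canonical homomorphism $\pi_n$ and the identification of generators needed later for the contextual translation; your Yoneda argument gives existence of \emph{an} isomorphism, which suffices for the theorem as stated, but the generator-matching you defer to ``unravelling the unit through $\phi$ and $\psi$'' does need to be carried out if the result is to feed into Lemma~\ref{lem: 2-translation}. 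That verification is routine but should not be omitted in a final write-up.
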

\begin{proof}
    Observe that the first part is an adaptation of Theorem~7 of~\cite{CPZ2024}.
For the \emph{moreover} part, note that the case $n=2$ is an adaptation of
Corollary~5 of \cite{CPZ2024}. We proceed by induction on $n$.
Assume, as induction hypothesis, that
\[
\mathbf{C}(\mathbf{F}_{\mathbb{MDRDL'}}(k))
\cong
\mathbf{F}_{\mathbb{MRDL}}(X_k)/\theta_k.
\]
We show that the statement holds for $n=k+1$.
Indeed, since both functors $\mathbf{C}$ and $\mathbf{F}_{\mathbb{MDRDL'}}$
preserve coproducts, and
$\mathbf{C}(\mathbf{F}_{\mathbb{MDRDL'}}(1))$ is isomorphic to
\[
\mathbf{F}_{\mathbb{MRDL}}(x_{k+1}^{1},x_{k+1}^{2})
\big/
\mathsf{Cg}^{\mathbf{F}_{\mathbb{MRDL}}(2)}(x_{k+1}^{1}\odot x_{k+1}^{2}, 0),
\]
it follows from the well-known description of coproducts of algebras that
\begin{displaymath}
\begin{array}{rcl}
\mathbf{C}(\mathbf{F}_{\mathbb{MDRDL'}}(k+1))
& \cong &
\mathbf{C}(\mathbf{F}_{\mathbb{MDRDL'}}(k))
\,+\,
\mathbf{C}(\mathbf{F}_{\mathbb{MDRDL'}}(1)) \\[0.4em]
& \cong &
\mathbf{F}_{\mathbb{MRDL}}(X_k)/\theta_k
\,+\,
\mathbf{F}_{\mathbb{MRDL}}(x_{k+1}^{1},x_{k+1}^{2})
\big/
\mathsf{Cg}^{\mathbf{F}_{\mathbb{MRDL}}(2)}(x_{k+1}^{1}\odot x_{k+1}^{2}, 0) \\[0.4em]
& \cong &
\mathbf{F}_{\mathbb{MRDL}}(X_{k+1})
\big/
\mathrm{Cg}_{\mathbf{F}_{\mathbb{MRDL}}(X_{k+1})}\!\big(
Z\cup\,
\{(x^{1}_{k+1}\odot x^{2}_{k+1},0)\}
\big) \\[0.4em]
& \cong &
\mathbf{F}_{\mathbb{MRDL}}(X_{k+1})/\theta_{k+1},
\end{array}
\end{displaymath} 
where $Z=\{(x^{1}_{i}\cdot x^{2}_{i},0)\mid 1\leq i\leq k\}$. Thus, the statement holds for every $n\geq 1$, as claimed.
\end{proof}

We now describe the construction of the contextual $2$-translation associated
with the adjunction $\mathbf{C} \dashv \mathbf{K}$. Consider the homomorphism
$h\colon \mathbf{F}_{\mathbb{MRDL}}(2)\to
\mathbf{C}(\mathbf{F}_{\mathbb{MDRDL'}}(1))$ given by
\[
t(x_{1}^{1},x_{1}^{2})
\longmapsto
t^{\mathbf{C}(\mathbf{F}_{\mathbb{MDRDL'}}(1))}(z\cup c,\sim z\cup c),
\]
and let $\psi$ be an $n$-ary function symbol in the language of monadic
$c$-differential residuated distributive lattices satisfying \textbf{CK}.
It is well known that $\psi$ can be identified with a unique
$\mathbb{MDRDL'}$-homomorphism
$\psi\colon \mathbf{F}_{\mathbb{MDRDL'}}(1)\to
\mathbf{F}_{\mathbb{MDRDL'}}(n)$.
Let
$X_n=\{x_{1}^{1},x_{1}^{2},\ldots,x_{n}^{1},x_{n}^{2}\}$.
By Theorem~4.3 of~\cite{M2018}, the associated $2$-translation $\tau$ is
determined by a homomorphism $\tau(\psi)$ that makes the following diagram
commute:
\begin{displaymath}
\xymatrix{
& & \ar@{-->}[ddll]_-{\tau(\psi)} \mathbf{F}_{\mathbb{MRDL}}(2) \ar[d]^-{h} \\
& & C(\mathbf{F}_{\mathbb{MDRDL'}}(1)) \ar[d]^-{C(\psi)} \\
\mathbf{F}_{\mathbb{MDRDL'}}(X_n) \ar[rr]_-{\pi_n} & & C(\mathbf{F}_{\mathbb{MDRDL'}}(n))
}
\end{displaymath}
where $\pi_n$ denotes the canonical homomorphism induced by Theorem~\ref{theorem C}. We emphasize that the existence of $\tau(\psi)$ follows from the fact that
$\mathbf{F}_{\mathbb{MRDL}}(2)$ is onto-projective in the variety $\mathsf{\mathbb{MRDL}}$.
Moreover, the map
$\tau \colon L_{\mathbb{MDRDL'}}\rightarrow L_{\mathbb{MRDL}}^{2}$ is completely
determined by its action on the generators. That is,
$\tau(\psi) = (\tau(\psi)(x_{1}^{1}), \tau(\psi)(x_{1}^{2}))$.
Consequently, to define $\tau(\psi)$ it suffices to find terms
$t(x_{1}^{1},x_{1}^{2},\ldots,x_{n}^{1},x_{n}^{2})$ and
$s(x_{1}^{1},x_{1}^{2},\ldots,x_{n}^{1},x_{n}^{2})$ in
$\mathbf{F}_{\mathbb{MDRDL'}}(X_n)$ such that
\begin{equation}\label{eq: terms}
C(\psi)(h(x)) = h(t(x_{1}^{1},x_{1}^{2},\ldots,x_{n}^{1},x_{n}^{2}))
\quad\text{and}\quad
C(\psi)(h(y)) = h(s(x_{1}^{1},x_{1}^{2},\ldots,x_{n}^{1},x_{n}^{2})).    
\end{equation}

An important aspect of the 2-translation $\tau$ is that the associated terms $t$ and $s$ admit an explicit construction via the functor $\mathbf{K}$, as the following result shows.

\begin{lemma}\label{lem: 2-translation}
The adjunction $\mathbf{C}\dashv \mathbf{K}$ induces the 2-translation  $\tau \colon L_{\mathbb{MDRDL'}}\rightarrow L_{\mathbb{MRDL}}^{2}$ defined as follows:
\begin{displaymath}
\begin{array}{rcl}
\tau(\cap)((x_{1}^{1},x_{1}^{2}), (x_{2}^{1},x_{2}^{2})) & := & (x_{1}^{1}\wedge x_{2}^{1},x_{1}^{2}\vee x_{2}^{2})\\
\tau(\cup)((x_{1}^{1},x_{1}^{2}), (x_{2}^{1},x_{2}^{2})) & := & (x_{1}^{1}\vee x_{2}^{1},x_{1}^{2}\wedge x_{2}^{2})\\
\tau(\otimes)((x_{1}^{1},x_{1}^{2}), (x_{2}^{1},x_{2}^{2})) & := & (x_{1}^{1}\odot x_{2}^{1}, (x_{1}^{1}\rightarrow x_{2}^{2})\wedge (x_{2}^{1}\rightarrow x_{1}^{2}))\\
\tau(\leadsto)((x_{1}^{1},x_{1}^{2}), (x_{2}^{1},x_{2}^{2})) & := & ((x_{1}^{1}\rightarrow x_{2}^{1})\wedge (x_{2}^{2}\rightarrow x_{1}^{2}), x_{1}^{1}\cdot x_{2}^{2})\\
\tau(\square_c)(x_{1}^{1},x_{1}^{2}) & := & (\square x_{1}^{1},\lozenge x_{1}^{2})\\
\tau({\sim})(x_{1}^{1},x_{1}^{2}) & := & (x_{1}^{2},x_{1}^{1})\\
\tau(c) & := & (0,0)\\
\tau(0) & := & (0,1)\\
\tau(1) & := & (1,0).\\
\end{array}
\end{displaymath}    
\end{lemma}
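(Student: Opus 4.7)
The plan is to verify equation~(\ref{eq: terms}) directly for each basic operation symbol in the signature of $\mathbb{MDRDL'}$, using the componentwise description of the Kalman operations recalled in Section~2 together with the quantifier formula $\square_{K}(x,y)=(\square x,\lozenge y)$ from Section~4. Since $\tau(\psi)=(\tau(\psi)(x_{1}^{1}),\tau(\psi)(x_{1}^{2}))$ is determined by the two terms $t$ and $s$ satisfying~(\ref{eq: terms}), the lemma reduces to identifying these two terms for each $\psi$.

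First, I would fix a generator $z$ of $\mathbf{F}_{\mathbb{MDRDL'}}(1)$ so that $h(x_{1}^{1})=z\cup c$ and $h(x_{1}^{2})={\sim}z\cup c$. For an $n$-ary operation symbol $\psi$, identified with the homomorphism $\mathbf{F}_{\mathbb{MDRDL'}}(1)\to\mathbf{F}_{\mathbb{MDRDL'}}(n)$ sending $z\mapsto \psi(z_{1},\ldots,z_{n})$, the map $\mathbf{C}(\psi)$ sends $z\cup c$ to $\psi(z_{1},\ldots,z_{n})\cup c$ and ${\sim}z\cup c$ to ${\sim}\psi(z_{1},\ldots,z_{n})\cup c$. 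By Propositions~\ref{proposition5.13} and~\ref{proposition5.14}, the assignment $u\mapsto(u\cup c,{\sim}u\cup c)$ is an isomorphism in $\mathbb{MDRDL'}$ from $\mathbf{F}_{\mathbb{MDRDL'}}(n)$ onto $\mathbf{K}(\mathbf{C}(\mathbf{F}_{\mathbb{MDRDL'}}(n)))$. Unfolding $\psi$ through this isomorphism and through the Kalman formulas produces a pair of terms $(t_{1},t_{2})$ in the elements $z_{i}\cup c$ and ${\sim}z_{i}\cup c$; the preimages of these under $h$ yield precisely the two terms $t=t_{1}$ and $s=t_{2}$ in $\mathbf{F}_{\mathbb{MRDL}}(X_{n})$ required by equation~(\ref{eq: terms}).

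The remaining work is a case analysis over the operation symbols. For the lattice operations, the formulas $(x_{1},y_{1})\cap(x_{2},y_{2})=(x_{1}\wedge x_{2},y_{1}\vee y_{2})$ and $(x_{1},y_{1})\cup(x_{2},y_{2})=(x_{1}\vee x_{2},y_{1}\wedge y_{2})$ immediately yield the stated translations of $\cap$ and $\cup$; analogously, the Kalman definitions of $\otimes$ and $\leadsto$ recalled in Section~2 yield the clauses for these operations. The involution clause follows from ${\sim}(x,y)=(y,x)$, and the constants $0,c,1$ translate to $(0,1),(0,0),(1,0)$, respectively. The genuinely new ingredient is the center universal quantifier: here the identity $\square_{K}(x,y)=(\square x,\lozenge y)$ established in Section~4 forces $\tau(\square_{c})(x_{1}^{1},x_{1}^{2})=(\square x_{1}^{1},\lozenge x_{1}^{2})$.

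The only genuine subtlety is that the translation of $\square_{c}$ mixes both primitive quantifiers, which reflects the content of Theorem~\ref{theorem4.6}: it is precisely because $(L,\square,\lozenge)$ and $(\mathbf{K}(L),\square_{K})$ carry the same information that the Kalman translation of $\square_{c}$ couples $\square$ in the first coordinate with $\lozenge$ in the second. Beyond this conceptual point there is no serious obstacle, and the verification is essentially a tabulation of the componentwise Kalman formulas already available from Sections~2 and~4.
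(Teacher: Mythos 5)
Your proposal is correct and follows essentially the same route as the paper: both verify the defining equations~(\ref{eq: terms}) operation by operation by reading off the componentwise Kalman formulas, with $\square_c$ as the only case requiring real work. The sole difference is cosmetic: you invoke Proposition~\ref{proposition5.13} (that $u\mapsto(u\cup c,{\sim}u\cup c)$ intertwines $\square_c$ with $\square_K$) where the paper re-derives the identity $\lozenge({\sim}z\cup c)={\sim}\square(z)\cup c$ by hand from ($\square_c 5$) and the duality $\lozenge={\sim}\square{\sim}$.
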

\begin{proof}
Recall first that the map
$\mathbf{C}(\square_c)\colon
\mathbf{C}(\mathbf{F}_{\mathbb{MDRDL'}}(1))
\to
\mathbf{C}(\mathbf{F}_{\mathbb{MDRDL'}}(1))$
sends a term $u(z)$ to $u(\square_c(z))$. Moreover, by
Theorem~\ref{theorem C}, the homomorphism
$h\colon \mathbf{F}_{\mathbb{MRDL}}(2)\to
\mathbf{C}(\mathbf{F}_{\mathbb{MDRDL'}}(1))$
maps a term $t(x_{1}^{1},x_{1}^{2})$ to
$t^{\mathbf{C}(\mathbf{F}_{\mathbb{MDRDL'}}(1))}(z\cup c,\sim z\cup c)$.
Now consider the terms
$t(x_{1}^{1},x_{1}^{2})=\square x_{1}^{1}$ and
$s(x_{1}^{1},x_{1}^{2})=\lozenge x_{1}^{2}$.
It is immediate to verify that
$h(\square_c(x_{1}^{1}))=\mathbf{C}(\square_c)(h(x_{1}^{1}))$.
On the other hand, by Proposition~\ref{proposition5.7.}, we obtain
\[
\begin{array}{rcl}
h(\lozenge x_{1}^{2})
& = & \lozenge(\sim z \cup c) \\
& = & \sim \square(z \cap c) \\
& = & \sim(\square(z)\cap c) \\
& = & \sim \square(z)\cup c \\
& = & \mathbf{C}(\square)(\sim z\cup c) \\
& = & \mathbf{C}(\square)(h(x_{1}^{2})).
\end{array}
\]
Therefore, we may define
$\tau(\square_c)(x_{1}^{1},x_{1}^{2})=(\square x_{1}^{1},\lozenge x_{1}^{2})$,
as required. The proofs for the clauses concerning
$\tau(\cap)$, $\tau(\cup)$, $\tau(\otimes)$, and $\tau(\leadsto)$
are analogous to those presented in Lemma~14 of~\cite{CPZ2024}.
\end{proof}

By Theorem~\ref{theorem C}, the generating set $\{(x\odot y,0)\}$ may be identified
with the set of equations $\Theta=\{x\odot y\approx 0\}$. It then follows from
Theorem~4.3 of~\cite{M2018} that the pair $(\tau,\Theta)$ yields a $2$-contextual
translation from $\models_{\mathbb{MDRDL'}}$ to $\models_{\mathbb{MRDL}}$.
Since the functor $\mathbf{C}$ is faithful (Theorem~\ref{theorem5.15}),
Lemma~6.4 of~\cite{M2016} applies, yielding the following theorem.

\begin{theorem}
For every cardinal $\lambda$ and
$\Phi \cup \{ \varepsilon \approx \delta\}\subseteq
\mathrm{Eq}(\mathcal{L}_{\mathbb{MDRDL'}},\lambda)$,
\[
\Phi \models_{\mathbb{MDRDL'}} \varepsilon \approx \delta
\Longleftrightarrow
\tau_{\ast}(\Phi)\cup
\{(x_j^{1}\odot x_j^{2},x_j^{1}\odot x_j^{2})\approx (0,0)\colon j<\lambda \}
\models_{\mathbb{MRDL}}
\tau_{\ast}(\varepsilon \approx \delta).
\]
Here $(\tau, \Theta)$ denotes the contextual translation of
$\models_{\mathbb{MDRDL'}}$ into $\models_{\mathbb{MRDL}}$ induced by $\mathbf{C}$.
\end{theorem}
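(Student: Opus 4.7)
The plan is to derive the biconditional as a direct application of Moraschini's general correspondence between adjunctions between varieties and contextual translations between their equational consequence relations. All the data needed is in place: Theorem~\ref{theorem C} describes $\mathbf{C}(\mathbf{F}_{\mathbb{MDRDL'}}(\lambda))$ as $\mathbf{F}_{\mathbb{MRDL}}(X_\lambda)/\theta_\lambda$ with $\theta_\lambda$ generated by $\Theta=\{x_j^1\odot x_j^2\approx 0 : j<\lambda\}$; Lemma~\ref{lem: 2-translation} supplies the explicit $2$-translation $\tau$ associated with $\mathbf{C}\dashv\mathbf{K}$; and Theorem~\ref{theorem5.15} even strengthens this adjunction to a categorical equivalence.

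For the forward implication, the idea is a direct appeal to Theorem~4.3 of~\cite{M2018}. That result guarantees that every adjunction between varieties induces a sound contextual translation between the associated equational consequence relations; applied to $\mathbf{C}\dashv\mathbf{K}$ together with the explicit data $(\tau,\Theta)$ computed in Lemma~\ref{lem: 2-translation} and Theorem~\ref{theorem C}, it ensures that any equation holding in every algebra of $\mathbb{MDRDL'}$ is translated into an equation holding in every algebra of $\mathbb{MRDL}$ under the contextual assumption $x_j^1\odot x_j^2\approx 0$ on each generator pair. The reverse implication is the actual content of the theorem, and for it I would invoke Lemma~6.4 of~\cite{M2016}, which upgrades a sound contextual translation coming from an adjunction $\mathbf{F}\dashv\mathbf{G}$ to a biconditional one as soon as $\mathbf{F}$ is faithful. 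Since Theorem~\ref{theorem5.15} asserts that $\mathbf{C}\dashv\mathbf{K}$ is an equivalence, the functor $\mathbf{C}$ is fully faithful and, in particular, faithful, so the hypothesis of Lemma~6.4 is met and the claimed biconditional follows.

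The main obstacle I foresee is bookkeeping rather than conceptual. One must verify that the identification, in Theorem~\ref{theorem C}, of the congruence $\theta_\lambda$ with the set $\Theta=\{x_j^1\odot x_j^2\approx 0 : j<\lambda\}$ conforms exactly to the normalization of ``context'' used in~\cite{M2018, M2016}; only then can Moraschini's soundness and completeness statements be applied verbatim to the concrete $\tau$ computed in Lemma~\ref{lem: 2-translation}. Once that alignment of notation is made explicit, no further computation is required and the biconditional is obtained by assembling the two cited lemmas.
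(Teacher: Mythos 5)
Your proposal is correct and follows essentially the same route as the paper: the text preceding the theorem likewise identifies the context $\Theta$ via Theorem~\ref{theorem C}, invokes Theorem~4.3 of~\cite{M2018} to obtain the $2$-contextual translation $(\tau,\Theta)$ from the adjunction $\mathbf{C}\dashv\mathbf{K}$, and then applies Lemma~6.4 of~\cite{M2016} using the faithfulness of $\mathbf{C}$ guaranteed by the equivalence of Theorem~\ref{theorem5.15}. No substantive difference from the paper's argument.
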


In informal terms, the theorem shows that equational reasoning in
$\mathbb{MDRDL'}$ can be faithfully carried out inside $\mathbb{MRDL}$ by
duplicating variables. That is to say, everything that can be deduced in
$\mathbb{MDRDL'}$ is precisely what can be deduced in $\mathbb{MRDL}$ via
$\tau$, modulo the equations $x_j^{1}\odot x_j^{2}\approx 0$.

\section{Conclusions and further work}

 The motivation of this paper is to lift the relationship between the categories $\mathbb{RDL}$ and  $\mathbb{DRDL'}$ to the categories $\mathbb{MRDL}$ and  $\mathbb{MDRDL'}$. In order to achieve our aim, we first reviewed some of their algebraic results related to monadic residuated lattices and obtained the notion of monadic involutive residuated lattices. The results of Subsection 4.1 established a bijection that maps a monadic residuated distributive lattice into a monadic c-differential residuated distributive lattice that satisfy $\mathbf{CK}$. This map extends to a functor that is a categorical equivalence in Subsection 4.2. Subsection 4.2 showed that there exists a categorical equivalence between the categories $\mathbb{MRDL}$ and  $\mathbb{MDRDL'}$.

 The paper is devoted to transferring the balance between the categories $\mathbb{MRDL}$ and  $\mathbb{MDRDL'}$. There is also an open problem that establish the corresponding relation between the categories $\mathbb{MDFL}_{\textmd{e}}$ and $\mathbb{MCDFL'}_{\textmd{e}}$, whose objects are monadic distributive FL$_\textrm{e}$-algebras and monadic distributive c-differential FL$_\textrm{e}$-algebras with a special condition.
 The critical point for obtaining this more abroad and more common result is to reconstruct the categorical equivalence between the categories $\mathbb{DFL}_{\textmd{e}}$ and $\mathbb{CDFL'}_{\textmd{e}}$, because Kalman functor $\mathbf{K}$ is no longer suitable for these two categories in general, which is one of the topics in our next discussions. Once we did that, we established the relationship among the categories, as shown in the following diagram, where \emph{inc} denotes the inclusion functor and $?_2$ is left adjoint to $?_1$ :

\begin{center}
\begin{tikzpicture}
  \node[inner sep=2pt] (MV^bullet) at (0,0) {$\mathbb{MDRDL'}$};
  \node[inner sep=2pt] (MV) at (0,3) {$\mathbb{MRDL}$};
  \node[inner sep=2pt] (MDRL) at (3,0) {$\mathbb{MCDFL'}_{\textmd{e}}$};
  \node[inner sep=2pt] (iIRL_0) at (3,3) {$\mathbb{MDFL}_{\textmd{e}}$};
  \node[inner sep=2pt] (DRLyipie) at (6,0) {$\mathbb{CDFL'}_{\textmd{e}}$};
  \node[inner sep=2pt] (IRL_0) at (6,3) {$\mathbb{DFL}_{\textmd{e}}$};
  
  \draw[->] ([xshift=3pt]MV.south) -- ([xshift=3pt]MV^bullet.north) node[midway, right=3pt] {$\mathbf{C}$};
  \draw[<-] ([xshift=-3pt]MV.south) -- ([xshift=-3pt]MV^bullet.north) node[midway, left=3pt] {$\mathbf{K}$};
  \draw[->] ([xshift=3pt]MDRL.north) -- ([xshift=3pt]iIRL_0.south) node[midway, right=3pt] {$?_2$};
  \draw[<-] ([xshift=-3pt]MDRL.north) -- ([xshift=-3pt]iIRL_0.south) node[midway, left=3pt] {$?_1$};
  \draw[->] ([xshift=3pt]DRLyipie.north) -- ([xshift=3pt]IRL_0.south) node[midway, right=3pt] {$?_2$};
  \draw[<-] ([xshift=-3pt]DRLyipie.north) -- ([xshift=-3pt]IRL_0.south) node[midway, left=3pt] {$?_1$};

  \draw[->] (MV) -- (iIRL_0) node[midway, above=2pt] {inc};
  \draw[->] (iIRL_0) -- (IRL_0) node[midway, above=2pt] {$\mathbf{F}$};
  \draw[->] (MV^bullet) -- (MDRL) node[midway, above=2pt] {inc};
  \draw[->] (MDRL) -- (DRLyipie) node[midway, above=2pt] {$\mathbf{F}$};
\end{tikzpicture}
\end{center}

As an application of the adjunction $\mathbf{C}\dashv\mathbf{K}$ we obtained a
a finite, nontrivial $2$-contextual translation from the equational consequence
relation of $\mathbb{MDRDL'}$ into that of $\mathbb{MRDL}$. The key step was an
explicit description of the algebras
$\mathbf{C}(\mathbf{F}_{\mathbb{MDRDL'}}(n))$ as quotients of free
$\mathbb{MRDL}$-algebras on duplicated generators, where the additional
congruence enforces the constraint $x_i^{1}\odot x_i^{2}=0$. This representation
allowed us to construct the translation $\tau$ explicitly and to show that
equational reasoning in $\mathbb{MDRDL'}$ can be faithfully simulated inside
$\mathbb{MRDL}$ by duplicating variables and imposing these equations. Beyond its
technical role, this result illustrates how adjunctions provide a uniform and
concrete mechanism to compare equational theories, suggesting further
applications of contextual translations to other algebraic expansions and
logical extensions.

\medskip
\medskip
\noindent{\bf Acknowledgments.} This work is supported by the National Natural Science Foundation of China (12501647), the Humanities and Social Sciences Youth Foundation of Ministry of Education of China "A propositional calculus formal study of monadic substructural logics''(24XJC72040001), and the Natural Science Basic Research Plan in Shaanxi Province of China (2025JC-YBMS-034, 2025JC-YBQN-092), and the Scientific Research Program Funded by Education Department of Shaanxi Provincial Government (Youth Innovation Team of Shaanxi Universities) (23JP132). 

\medskip
\medskip
\noindent{\bf Declarations}\\
{\bf Conflict of interest} The authors declare that there is no conflict of interests.


\begin{thebibliography}{00}
%\bibitem{Alten} C. J. Van Alten, J. G. Raftely, The finite model property for the implicational fragments of IPC without exchange and contraction, Studia Logica, {\bf 63} (1999), 213--222.

%\bibitem{Bahls} P. Bahls, J. Cole, N.  Galatos,  P.  Jipsen,  C. Tsinakis, Cancellative residuated lattices, Algebra Universalis, {\bf 50} (2003), 83--106.

\bibitem{Belluce} L. P. Belluce, R. Grigolia, A. Lettieri, Representations of monadic MV-algebras, Studia Logica, {\bf 81} (2005), 123--144.

\bibitem{Bergman} G. M. Bergman, An Invitation to General Algebra and Universal Constructions. Springer Cham, 2015.


\bibitem{Bezhanishvilia} G.  Bezhanishvili, Varieties of monadic Heyting algebras. Part I, Studia Logica, {\bf 61} (1998), 367--402.

 \bibitem{Bezhanishvilib} G. Bezhanishvili, Varieties of monadic Heyting algebras. Part II: Duality theory, Studia Logica, {\bf 62} (1999), 21--48.

\bibitem{Bezhanishvilic} G.  Bezhanishvili, Varieties of monadic Heyting algebras. Part III, Studia Logica, {\bf 64} (2000), 215--256.
    
\bibitem{Blount} K. Blount, C. Tsinakis, The structure of residuated lattices, Internation Journal of Algebra and Computation,{\bf 13}(2003),437-461.

%\bibitem{Blok} W. J. Blok, C. J. Van Alten, The finite embeddability property for residuated lattices, pocrim and BCK-algebras, Algebra Universalis, {\bf 48} (2002), 253--271.

%\bibitem{Blount} K. Blount, C. Tsinakis, The structure of residuated lattices, International Journal of Algebra and Computation, {\bf 13} (2003), 437--461.

\bibitem{Brignolea} D. Brignole, A. Monteiro, Caracterisation des algebres de Nelson par des egalites I, Proceedings of Japan Academy, {\bf 43} (1967), 279--283.

\bibitem{Brignoleb} D. Brignole, A. Monteiro, Caracterisation des algebres de Nelson par des egalites II, Proceedings of Japan Academy, {\bf 43} (1967), 284--285.

\bibitem{CPZ2024} I. Calomino, G. Pelaitay, W. Zuluaga Botero, An alternative definition of tense operators on residuated lattices, Journal of Logic and Computation, {\bf{35}}, Issue 2, (2024), exae009

\bibitem{Castano} D. Casta\~{n}o, C. Cimadamore, J. P. Diaz Varela, L. Rueda, Monadic BL-algebras: The equivalent algebraic semantics of Hajek's monadic fuzzy logic, Fuzzy Sets and Systems, {\bf 320} (2017), 40--59.

\bibitem{Castano1} D. Casta\~{n}o, C. Cimadamore, J. P. Diaz Varela, L. Rueda, Completeness for monadic fuzzy logics via functional algebras, Fuzzy Sets and Systems, {\bf 407} (2021), 161--174.

\bibitem{Castano 2} D. Casta\~{n}o, C. Cimadamore, J. P. Diaz Varela, L. Rueda, An algebraic study of S5-modal G\"{o}del logic, Studia Logica, {\bf 109} (2021), 937--967.


%\bibitem{Castiglionia} J. L. Castiglioni, H. J. San Mart\'{i}n,  Compatible operations on residuated lattices, Studia Logica, {\bf 98} (2011), 203--222.

\bibitem{Castiglionib} J. L. Castiglioni, M. Menni, M. Sagastume,  On some categories of involutive centered residuated lattices, Studia Logica, {\bf 90} (2008), 93--124.

\bibitem{Castiglionic} J. L. Castiglioni, R. Lewin, M. Sagastume,  On a definition of a variety of monadic $\ell$-groups, Studia Logica, {\bf 102} (2014), 67--92.

\bibitem{Chang} C.  C.  Chang, Algebraic analysis of many-valued logics, Transactions of the American Mathematical Society, {\bf 88} (1958), 467--490.

\bibitem{Cignoli} R. Cignoli, The class of Kleene algebras satisfying an interpolation property and Nelson algebras, Algebra Universalis, {\bf 23} (1986), 262--292.

\bibitem{Cimadamore} C. Cimadamore, J. P. Diaz Varela, Monadic MV-algebras are equivalent to monadic $\ell$-groups with strong unit, Studia Logica, {\bf 98} (2011), 175--201.
    
\bibitem{Cintula} P. Cintula, G. Metcalfe, N. Tokuda, One-variable fragments of first-order logics, Bulletin of Symbolic Logic, {\bf 30} (2024), 253--279.   
    
{\bibitem{Cintulab} P. Cintula, C. Noguera, Logic and Implication:
An Introduction to the General Algebraic Study of Non-classical Logics, Springer Verlag, 2021.}    
    

%\bibitem{Dilworth} R. P. Dilworth, Non-commutative residuated lattices, Transactions of the American Mathematical Society, {\bf 46} (1939), 426--444.

%\bibitem{Dragulici} D. D. Dr\v{a}gulici, Quantifiers on BL-algebras, Analele Universitatill Bucuret Mathematica Informatica, {\bf 50} (2001), 29--42.

%\bibitem{Galatosa} N. Galatos, C. Tsinakis, Generalized MV-algebras, Journal of Algebra, {\bf 283} (2005), 254--291.

\bibitem{Galatos} N. Galatos, P. Jipsen, T. Kowalski, H. Ono, Residuated Lattices: An Algebraic Glimpse at Substructural Logics, Elsevier Science, United States, 2007.

\bibitem{Galatosb} N. Galatos, J. G. Raftery, Adding involution to residuated lattices, Studia Logica, {\bf 77} (2004), 181--207.
    
{\bibitem{Galatosc} N. Galatos, H. Ono, Glivenko theorems for substructural logics over FL, The Journal of Symbolic Logic, {\bf 71} (2006), 1353--1384.}      
    
{\bibitem{Galatosd} N. Galatos, H. Ono, Cut elimination and strong separation for substructural logics: An algebraic approach, Annals of Pure and Applied logic, {\bf 161} (2010), 1097--1133. }  
    
{\bibitem{Galatose} N. Galatos, H. Ono, Algebraization, parametrized local deduction theorem
 and interpolation for substructural logics over FL, Studia Logica, {\bf 83} (2006), 279--308.} 

\bibitem{G1933} G\"odel K, Eine Interpretation des intuitionistischen Aussagenkalküls, Ergebnisse eines mathematisches Kolloquiums, \textbf{4} (1933), 39--40. 

%\bibitem{Grigolia} R. Grigolia, Monadic BL-algebras, Georgian Mathematical Journal, {\bf 13} (2006), 267--276.

%\bibitem{Hajek} P.  H\'{a}jek, Metamathematics of Fuzzy Logic, Kluwer Academic Publishers, Dordrecht, 1998.

\bibitem{Hajeka} P.  H\'{a}jek, Metamathematics of Fuzzy Logic, Kluwer Academic Publishers, Dordrecht, 1998.

\bibitem{Halmos} R.  P.  Halmos,  Algebraic logic. I: Monadic boolean algebras, Composition Mathematica, {\bf 12} (1955), 217--249.


\bibitem{Jipsen} P.  Jipsen, C. Tsinakis, A Survey of Residuated Lattices, Ordered Algebraic Structures (J. Martinez, editor), Kluwer Academic Publishers, Dordrecht, 2002, 15--96.

\bibitem{Kol} A.N. Kolmogorov, Sur le principe de tertium non datur. Matematicheskii Sbornik, \textbf{32} (1925), 646--667.

\bibitem{Lattanzi} M.  Lattanzi, Wajsberg algebras with a U-operator, Journal of Multiple-Valued Logic and Soft Computing, {\bf 10} (2004), 315--338.

\bibitem{Lattanzia} M. Lattanzi, A. Petrovich, A duality for monadic (n+1)-valied MV-algebras, In: Proceeding of the 9th ''Dr. Antonio A. R. Monteiro'' congress (Spanish), 2008, 107--117.

\bibitem{Mc1996} R. McKenzie, An algebraic version of categorical equivalence for varieties and more general algebraic categories. Logic and Algebra. Lecture Notes in Pure and Applied Mathematics, vol. 180 (1996), 211--243.
    
\bibitem{Metcalfe} G. Metcalfe, F. Montagna, Substructural fuzzy logics, Journal of Symbolic Logic, {\bf 72} (2007), 834--864.

\bibitem{M2018} T. Moraschini, A logical and algebraic characterization of adjuntions between generalized quasi-varieties, Journal of Symbolic Logic, \textbf{83} (2018), 899--919.

\bibitem{M2016} Moraschini T.: Investigations into the role of translations in abstract algebraic logic. PhD Thesis. Universitat de Barcelona, 2016.

\bibitem{Nola} A. D Nola, R. Grigolia, On monadic MV-algebras, Annals of Apure and Applied Logic, {\bf 128} (2004), 125--139.
\bibitem{Ono} H. Ono, Y. Komori, Logics without the contraction rule, Journal of Symbolic Logic, {\bf 50} (1985),169-201.

\bibitem{Rachuneka} J. Rach\.{u}nek, D. \v{S}alounov\'{a}, Monadic GMV-algebras, Archive for Mathematical Logic, {\bf 47} (2008), 277--297.

\bibitem{Rasiowa} H. Rasiowa, $\mathcal{N}$-lattices and constructive logic with strong negation,  Fundamenta Mathematicae, {\bf 46} (1958), 61--80.

\bibitem{Rodriguez}  A. J. Rodríguez, Un estudio algebraico de los Cálculos Proposicionales de
\L ukasiewicz, Thesis Doctoral, Universidad de Barcelona, 1980.

\bibitem{Rutledge} J. D. Rutledge, A preliminary investigation of the infinitely many-valued predicate calculus, Ph. D, Thesis, Cornell University, 1959.
    
\bibitem{Sagastume} M. S. Sagastume, H. J. San Mart\'{i}n, The logic {\L}$^\bullet$, Mathematical Logic Quarterly, {\bf 60} (2014), 375--388.   
     
\bibitem{Sagastume1} M. S. Sagastume, H. J. San Mart\'{i}n, A categorical equivalence motivated by Kalman's construction, Studia Logica, {\bf 104} (2016), 185--208.   

\bibitem{Tuyt} O. F. Tuyt, One-variable fragment of first-order many-valued logics, Ph. D, Thesis, University of Bern, 2021.

\bibitem{Tsinakis} C. Tsinakis, A. M. Wille, Minimal varieties of involutive residuated lattices, Studia Logica, {\bf 83} (2006), 407--423.

\bibitem{Vakarelov} D. Vakarelov, Notes on $\mathcal{N}$-lattices and constructive logic with strong negation, Studia Logica, {\bf 36} (1977), 109--125.
    
\bibitem{Wang4} J. T. Wang, Y. H. She, P. F. He, N. N. Ma, On categorical equivalence of weak monadic residuated distributive lattices and weak monadic c-differential residuated distributive lattices, Studia Logica, {\bf 111}(2023):361--390.










\end{thebibliography}
\end{document}